\title[Distance and the Goeritz groups of bridge decompositions]
{Distance and the Goeritz groups of bridge decompositions}
\author{Daiki Iguchi}
\address{
Department of Mathematics \newline
\indent Hiroshima University, 1-3-1 Kagamiyama, Higashi-Hiroshima, 739-8526, Japan}
\email{d200643@hiroshima-u.ac.jp}
\author{Yuya Koda}
\address{
Department of Mathematics \newline
\indent Hiroshima University, 1-3-1 Kagamiyama, Higashi-Hiroshima, 739-8526, Japan}
\email{ykoda@hiroshima-u.ac.jp}
\theoremstyle{plain}
\newtheorem*{theorem*}{Theorem}
\newtheorem*{lemma*} {Lemma}
\newtheorem*{corollary*} {Corollary}
\newtheorem*{proposition*}{Proposition}
\newtheorem*{conjecture*}{Conjecture}
\newtheorem*{claim*}{Claim}
\newtheorem{theorem}{Theorem}[section]
\newtheorem{lemma}[theorem]{Lemma}
\newtheorem{proposition}[theorem]{Proposition}
\theoremstyle{remark}
\newtheorem*{definition}{Definition}
\newtheorem*{remark}{Remark}
\newtheorem*{ack}{Acknowledgements}
\theoremstyle{definition}
\newtheoremstyle{citing}% name
  {}%      Space above, empty = `usual value'
  {}%      Space below
  {\itshape}% Body font
  {}%         Indent amount (empty = no indent, \parindent = para indent)
  {\bfseries}% Thm head font
  {.}%        Punctuation after thm head
  {.5em}%     Space after thm head: " " = normal interword space;
\theoremstyle{citing}
\newtheorem*{citingtheorem}{} 
\newcommand{\Cl}{\operatorname{Cl}}
\begin{document}

\maketitle

\begin{abstract}
We prove that if the distance of a bridge decomposition of a link with respect to a Heegaard splitting 
of a $3$-manifold is at least $6$, 
then the Goeritz group is a finite group. 
%We also show that the same conclusion holds for a usual bridge decomposition of a link in the $3$-sphere 
%when its distance is at least 4.  
%This is a generalization of Namazi and Johnson's result about the Goeritz groups of Heegaard splittings. 
\end{abstract}

\footnote[0]{\textbf{2020 Mathematics Subject Classification}: 57K10, 57M60.}
\footnote[0]{\textbf{Keywords}: bridge decomposition, curve complex, Goeritz group.}

%\begin{small}
%\hspace{2em}  \textbf{2010 Mathematics Subject Classification}: 
%57N10; 57M60

%57N13 Topology of E4, 4-manifolds
%57M20 Two-dimensional complexes
%57R55 Differentiable structures
%57R65 Surgery and handlebodies

%\hspace{2em} 
%\textbf{Keywords}:
%3-manifold, Heegaard splitting, mapping class group, distance. 
%\end{small}

\section*{Introduction}\label{sec:introduction}

It is well known that any closed orientable $3$-manifold $M$ is obtained by gluing two handlebodies $V^+$ and $V^-$ of the same genus
along their boundaries. 
Such a decomposition of $M$ is called a {\it Heegaard splitting}, 
and the common boundary $\Sigma$ of $V^+$ and $V^-$ is called the {\it Heegaarrd surface} of the splitting. 
The {{\it distance} of a Heegaard splitting $M = V^+ \cup_\Sigma V^-$ 
is a measure of complexity introduced by Hempel \cite{Hem01}. 
It is defined to be the distance  
between the sets of meridian disks of $V^+$ and $V^-$ 
in the curve graph of $\Sigma$. 
The distance has successfully provided a way of describing the topology 
and geometry of $3$-manifolds.  

A {\it bridge decomposition} of a link $L$ in a closed orientable $3$-manifold $M$ 
is a Heegaard splitting $M = V^+ \cup_\Sigma V^-$ such that 
$L$ intersects each of $V^+$ and $V^-$ in properly embedded trivial arcs.  
When the genus of the surface $\Sigma$, called a {\it bridge surface}, is $g$, and the number of components of 
$V^\pm \cap L$ is $n$, we particularly call such a decomposition a {\it $(g, n)$-decomposition} of 
$L$.  
The distance is also defined for a bridge decomposition 
in the same way as in the case of a Heegaard splitting, and 
many results about Heegaard splittings have been extended to bridge decompositions. 
For example,  Bachman-Schleimer \cite{BS05} showed that 
the distance of a bridge decomposition of a knot 
is bounded above by the Euler characteristic 
of an essential surface in the complement of the knot, 
which is a generalization of a result of Hartshorn \cite{Hart02}. 
The arguments in \cite{BS05} apply to the case of links as well, and their results, 
in particular, imply that if the distance of a link in a $3$-manifold is at least $5$,
then the complement of the link admits a complete hyperbolic structure of finite volume. 
(The definition of the distance in this paper is slightly different from that in \cite{BS05}, see Section \ref{sec:preliminaries_dist}.) 
As another example, 
Tomova \cite{Tom07} gave a sufficient condition for uniqueness of bridge decompositions of knots 
in terms of the distances and the Euler characteristics of bridge surfaces,  
which is a generalization of a result of Sharlemann-Tomova \cite{ST06}.    

In this paper, we are interested in the Goeritz group of a bridge decomposition. 
The {\it Goeritz group} (or {\it the mapping class group}) {\it of a Heegaard splitting} 
is defined to be the group of 
isotopy classes of self-diffeomorphisms of the ambient $3$-manifold that preserve the splitting setwize. 
Namazi \cite{Nam07} showed that for each topological type of Heegaard surface 
there exists a constant $C$ such that if the distance is at least $C$,  
then the Goeritz group is a finite group. 
Johnson \cite{Joh10_PAMS} had refined this result by showing the constant $C$ can be taken to be at most $4$ 
independently of the genus of the Heegaard surface. 
The concept of Goeritz group has also been extended for bridge decompositions 
in \cite{HIKK21}. In that paper, a variation of Namazi and Johnson's result for the case 
of bridge decomposition was obtained: 
 it was shown that the constant $C$ for the finiteness of the Goeritz group can be taken uniformly to be at most $3796$.  
The main result of the paper is the following, which improves the above mentioned result of \cite{HIKK21}.  

\begin{theorem}\label{thm:main}
Let $g\geq 0$, $n >0$ and $(g,n)\not=(0,1),(0,2),(1,1)$. 
Let $(M,L;\Sigma)$ be a $(g,n)$-decomposition of a link $L$ in a $3$-manifold $M$. 
If the distance of $(M,L;\Sigma)$ is at least $6$, 
then the Goeritz group $\mathcal{G}(M,L;\Sigma)$ is a finite group.  
Further, for  
a $(0,n)$-decomposition $(S^{3},L;\Sigma)$ of a link $L$ in the $3$-sphere $S^3$, 
where $n \geq 3$,  
if the distance of $(S^{3},L;\Sigma)$ is at least $5$, 
then the Goeritz group $\mathcal{G}(S^{3},L;\Sigma)$ is a finite group.  
\end{theorem}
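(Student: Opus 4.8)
The plan is to follow the broad strategy of Namazi \cite{Nam07}, Johnson \cite{Joh10_PAMS} and \cite{HIKK21} --- push the whole problem down to the curve graph of the bridge surface and kill infinite-order symmetries by a Nielsen--Thurston analysis --- but to replace the black-box use of Masur--Minsky constants of \cite{HIKK21} by a direct study of geodesics running between the two disk sets, which is what should bring the threshold down to $6$. Set $P=L\cap\Sigma$, a set of $2n$ marked points, and let $\mathcal{C}(\Sigma,P)$ be the curve graph of the punctured surface $\Sigma\setminus P$. The hypotheses $(g,n)\neq(0,1),(0,2),(1,1)$, together with $n\geq 3$ in the $S^3$ statement, are exactly what is needed for $\mathcal{C}(\Sigma,P)$ to be connected, of infinite diameter, and Gromov hyperbolic with subsurface projections obeying the Bounded Geodesic Image Theorem; they also exclude the low-complexity surfaces (four-punctured sphere, twice-punctured torus) for which the Goeritz group is genuinely infinite. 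A Goeritz homeomorphism preserves $\Sigma$ setwise, hence restricts to a self-homeomorphism of $(\Sigma,P)$, and isotopic Goeritz homeomorphisms restrict to isotopic ones, so there is a homomorphism $\rho\colon\mathcal{G}(M,L;\Sigma)\to\MCG(\Sigma,P)$ (after passing to the side-preserving subgroup of index at most two, which is harmless). An element of $\ker\rho$ is represented by a homeomorphism that is the identity on $\Sigma$ and preserves each of the trivial tangles $V^{\pm}\cap L$; a standard argument in handlebody-and-trivial-tangle topology shows such a homeomorphism is isotopic to the identity through such homeomorphisms, so $\ker\rho$ is trivial (at worst finite). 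Since $\MCG(\Sigma,P)$ is virtually torsion-free, every torsion subgroup of it is finite; therefore it suffices to show that the image $H=\rho(\mathcal{G}(M,L;\Sigma))$ contains no element of infinite order.

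So suppose $f\in H$ has infinite order. By construction $f$ preserves, as subsets of $\mathcal{C}(\Sigma,P)$, the two disk sets $\mathcal{D}^{+}$ and $\mathcal{D}^{-}$ of $V^{+}$ and $V^{-}$ (vertices bounding a compressing disk or a cut disk on the respective side), and by hypothesis $d(\mathcal{D}^{+},\mathcal{D}^{-})\geq 6$ (resp.\ $\geq 5$). By the Nielsen--Thurston classification $f$ is reducible or pseudo-Anosov. If $f$ is pseudo-Anosov, then for any $D\in\mathcal{D}^{+}$ the vertices $f^{\pm m}(D)$ lie in $\mathcal{D}^{+}$ and converge in the Gromov bordification of $\mathcal{C}(\Sigma,P)$ to the stable and unstable laminations of $f$; the same holds for $\mathcal{D}^{-}$, so these two laminations lie in the limit sets of both disk sets. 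I would then invoke (reproving it with an explicit constant if necessary) the principle that a lamination which is a limit of meridians of both sides forces the bridge distance to be small --- the laminational shadow of the classical fact that a curve compressible on both sides gives a splitting of small distance --- and get a contradiction. If $f$ is reducible, replace it by a power $f^{k}$ fixing each component of the canonical reduction system and each complementary subsurface. If $f^{k}$ restricts to a pseudo-Anosov on some complementary subsurface $W$, then $\pi_{W}(\mathcal{D}^{+})$ and $\pi_{W}(\mathcal{D}^{-})$ are $f^{k}$-invariant subsets of infinite diameter in $\mathcal{C}(W)$, so by the Bounded Geodesic Image Theorem every geodesic of $\mathcal{C}(\Sigma,P)$ joining $\mathcal{D}^{+}$ to $\mathcal{D}^{-}$ runs within distance one of $\partial W$; feeding this, together with the $f^{k}$-invariance of $\partial W$, into an estimate for the position of $\partial W$ along such a geodesic contradicts $d(\mathcal{D}^{+},\mathcal{D}^{-})\geq 6$. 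The analogous estimate for the annulus about a twisted reduction curve disposes of the remaining case in which $f^{k}$ becomes, up to a further power, a nontrivial multitwist along the reduction system. In each case we reach a contradiction, so $H$ has no infinite-order element and is finite.

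The $(0,n)$-decomposition of a link in $S^{3}$ with $n\geq 3$ is treated in the same way; the only structural difference is that $\Sigma$ is a sphere, so $\mathcal{D}^{\pm}$ consist of cut-disk curves only, which sharpens the ``middle of the geodesic'' estimate by one and lowers the threshold to $5$. The main obstacle is precisely this last, quantitative layer. Running the argument at the level of soft coarse geometry would merely reproduce a large, non-explicit bound of the type obtained in \cite{HIKK21}; obtaining the sharp values $6$ and $5$ requires an honest combinatorial analysis of geodesics between $\mathcal{D}^{+}$ and $\mathcal{D}^{-}$ --- locating a curve near the midpoint that lies at distance at least three from each disk set, exploiting the incompressibility properties such a curve must possess, and controlling the interaction of the disk sets with subsurface projections through the universal form of the Bounded Geodesic Image Theorem. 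The bridge-theoretic bookkeeping (cut disks versus compressing disks, the permutation action on $P$, the finitely many low-complexity exclusions, and the slightly different behaviour when $g=0$) adds length but, I expect, no essential difficulty.
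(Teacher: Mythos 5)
Your proposal is not a proof of the stated theorem; it is a strategy sketch in the Namazi--Johnson--\cite{HIKK21} coarse-geometric style, and the steps you defer are exactly the ones that would have to produce the numbers $6$ and $5$. In the pseudo-Anosov case you ``would then invoke (reproving it with an explicit constant if necessary)'' the principle that a lamination in the limit sets of both disk sets forces small distance, and in the reducible case you rely on an unspecified quantitative interplay between the Bounded Geodesic Image Theorem, the position of $\partial W$ along geodesics, and the invariance of $\partial W$. These are the hard parts: with the known quasiconvexity and BGIT constants this route yields only large, non-explicit thresholds of the kind in \cite{HIKK21}, and you give no argument that it can be pushed to $6$; your final paragraph concedes that the ``honest combinatorial analysis'' is missing. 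Your explanation of the improved bound $5$ in the genus-zero $S^3$ case (cut disks ``sharpen the middle-of-the-geodesic estimate by one'') is likewise unsubstantiated; in fact the correct mechanism is Jang's result that for $(0,n)$-decompositions in $S^3$ one has $d_{\mathcal{PD}}(M,L;\Sigma)=d(M,L;\Sigma)$ (when the latter is positive), which removes the loss of $2$ in $d \le d_{\mathcal{PD}}+2$ --- nothing about midpoints of geodesics.

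The paper proves the theorem by a different and essentially softer route, with no Nielsen--Thurston analysis at all: since $d(M,L;\Sigma)\ge 5$ implies (via the Bachman--Schleimer bound and the comparison between $d$ and $d_{\mathit{BS}}$) that $M_L$ is hyperbolic of finite volume, the group $\MCG(M,L)$ is already finite; the real work is Theorem \ref{thm:injection}, proved with double sweep-outs and Rubinstein--Scharlemann graphics in the spirit of Johnson \cite{Joh10_PAMS}, which shows that $d_{\mathcal{PD}}(M,L;\Sigma)\ge 4$ forces the natural map $\mathcal{G}(M,L;\Sigma)\to\MCG(M,L)$ to be injective. Then $d\ge 6$ gives $d_{\mathcal{PD}}\ge 4$ by $d\le d_{\mathcal{PD}}+2$, and $d\ge 5$ suffices in the $(0,n)$, $S^3$ case by Jang's proposition. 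If you want to salvage your approach, you must actually prove the quantitative lamination/subsurface-projection lemmas you invoke, with constants valid for all $(g,n)$ outside the excluded cases; as it stands, the kernel-triviality and virtually-torsion-free observations are fine, but the core of the argument is absent.
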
   

%Further, when the genus of the bridge surface is zero, the above theorem can be improved as follows:  
%
%\begin{theorem}\label{thm:sphere_case}
%Let $n \geq 3$. 
%Let $(S^{3},L;\Sigma)$ be a $(0,n)$-decomposition of a knot $L$ in the $3$-sphere. 
%If the distance of $(S^{3},L;\Sigma)$ is at least $4$, 
%then the Goeritz group $\mathcal{G}(S^{3},L;\Sigma)$ is a finite group.  
%\end{theorem}  

%Theorems~\ref{thm:main} and \ref{thm:sphere_case} are proved 
Theorems~\ref{thm:main} is proved 
by extending the argument of \cite{Joh10_PAMS} to the case of bridge decompositions. 
In fact, the major part of the proof is devoted to show the following. 

\begin{citingtheorem}[Theorem \ref{thm:injection}]
Let $L$ be a link in a $3$-manifold $M$ and 
$(M,L)=(V^{+},V^{+} \cap L) \cup_{\Sigma} (V^{-},V^{-} \cap L)$ a bridge decomposition. 
If the distance between the sets of disks and once-punctured disks in $V^{+} - L$ and $V^{-} - L$ 
in the curve graph of $\Sigma - L$ are at least $4$, then 
the natural homomorphism $\eta:\mathcal{G}(M,L;\Sigma) \rightarrow \mathrm{MCG}(M,L)$ is injective.   
\end{citingtheorem}

\noindent 
The key tool for the proof is the {\it double sweep-out} technique involving the theory 
of graphics introduced by Rubinstein-Scharlemann \cite{RS96}. 
As noted above, 
if the distance of the bridge decomposition $(M,L;\Sigma)$ is at least $6$, then 
the complement $M - L$ admits a hyperbolic structure, and hence 
the mapping class $\mathrm{MCG}(M,L)$ is a finite group. 
Theorem~\ref{thm:main} thus follows from Theorem~\ref{thm:injection} and these facts. 

The paper is organized as follows. 
In Section \ref{sec:preliminaries}, 
we review basic definitions and properties of the distance and the Goeritz group of  
a bridge decomposition. 
In Section \ref{sec:sweep-outs}, we review the theory of sweep-outs, which is the main tool of the paper. 
In Section \ref{sec:upper_bound_for_distance}, we give the proof of Theorem~\ref{thm:injection}. 
Finally, in Section \ref{sec:proof_of_main_thm}, we give the proof of Theorem~\ref{thm:main}.

\section{Preliminaries}\label{sec:preliminaries}
We work in the smooth category. 
Throughout the paper, we will  use the following notations and conventions:  
\begin{itemize}
\item Given two sets $A$ and $B$, we denote by $A - B$ or $A_{B}$ the relative complement of $B$ in $A$.    
\item For a topological space $X$, we denote by $|X|$ the number of connected components of $X$. 
For a subset $Y \subset X$, 
we denote by $\mathrm{Cl}(Y; X)$ or $\mathrm{Cl}(Y)$ the closure of $Y$ in $X$.  
\item For simplicity, 
we will not distinguish notationally between simple closed curves in a surface and their isotopy classes  
throughout.   
\end{itemize}

\subsection{Bridge decompositions}

Let $g \geq 0$ and $n >0$. 
Let $V$ be a handleboby of genus $g$. 
The union of $n$ properly embedded, mutually disjoint arcs in $V$ is called an {\it $n$-tangle}. 
An $n$-tangle in $V$ is said to be {\it trivial} 
if the arcs can be isotoped into $\partial V$ simultaneously.  
Let $L$ be a link in a closed orientable $3$-manifold $M$. 
Let $M=V^{+} \cup_{\Sigma} V^{-}$ be a genus-$g$ Heegaard splitting of $M$. 
A decomposition $(M,L)=(V^{+}, V^{+} \cap L) \cup_\Sigma (V^{-},V^{-} \cap L)$ 
is called a {\it $(g,n)$-decomposition} of $L$  
if $V^{+} \cap L$ and $V^{-} \cap L$ are trivial $n$-tangles in $V^{+}$ and $V^{-}$, respectively.  
We sometimes denote such a decomposition by $(M,L;\Sigma)$. 
The surface $\Sigma$ here is called the {\it bridge surface of $L$}. 
Two bridge decompositions of $L$ are said to be {\it equivalent} 
if their bridge surfaces are isotopic through bridge surfaces of $L$. 

\subsection{Curve graphs}
\label{sec:preliminaries_curves}

Let $g \geq 0$ and $k >0$. 
Let $\Sigma$ be a closed orientable surface of genus $g$ with $k$ marked points 
$p_{1},p_{2},\ldots,p_{k}$. 
Set $\Sigma^{\prime}:=\Sigma - \{p_{1},p_{2},\ldots,p_{k}\}$. 
A simple closed curve in $\Sigma^{\prime}$ is said to be {\it essential} 
if it does not bound a disk or a once-punctured disk in $\Sigma^{\prime}$. 
We say that an open arc $\alpha$ in $\Sigma'$ is {\it essential} if it satisfies the following:
\begin{itemize}
\item
$\Cl(\alpha; \Sigma) - \alpha \subset \{  p_{1},p_{2},\ldots,p_{k} \}$; and 
\item
If $\Cl(\alpha; \Sigma)$ is a simple closed curve bounding a disk $D$ in $\Sigma$, 
then the interior of $D$ contains at least one point of $\{  p_{1},p_{2},\ldots,p_{k} \}$.  
\end{itemize}
The {\it curve graph $\mathcal{C}(\Sigma^{\prime})$ of $\Sigma^{\prime}$} is the graph 
whose vertices are isotopy classes of essential simple closed curves in  $\Sigma^{\prime}$, 
and the edges are pairs of vertices $\{\alpha,\beta\}$ 
with $\alpha \cap \beta=\emptyset$. 
Similarly,  the {\it arc and curve graph $\mathcal{AC}(\Sigma^{\prime})$ of $\Sigma^{\prime}$} is the graph 
whose vertices are isotopy classes of essential simple closed curves and  
essential open arcs in $\Sigma'$, 
and the edges are pairs of vertices $\{\alpha,\beta\}$ 
with $\alpha \cap \beta=\emptyset$. 
%Here we denote by $\operatorname{Int}\alpha$ and $\operatorname{Int}\beta$ the interiors of $\alpha$ and $\beta$ respectively. 
By abuse of notation, 
we denote the underlying space of the curve graph (the arc and curve graph, respectively) 
by the same symbol $\mathcal{C}(\Sigma^{\prime})$ ($\mathcal{AC}(\Sigma^{\prime})$, respectively). 
The graph $\mathcal{C}(\Sigma^{\prime})$ ($\mathcal{AC}(\Sigma^{\prime})$, respectively) 
can be viewed as a geodesic metric space 
with the simplicial metric $d_{\mathcal{C}(\Sigma^{\prime})}$ ($d_{\mathcal{AC}(\Sigma^{\prime})}$, respectively). 
We note that the curve graph $\mathcal{C}(\Sigma^{\prime})$ is non-empty and connected if and only if $3g-4+k > 0$. 

\subsection{Distances}
\label{sec:preliminaries_dist}

In this subsection, we give the definition of the distance of a bridge decomposition 
and its variations, and summarize their basic properties. 

Let $(M,L)=(V^{+}, V^{+} \cap L) \cup_{\Sigma} (V^{-},V^{-} \cap L)$ be a $(g,n)$-decomposition 
of a link $L$ in a closed orientable $3$-manifold $M$, 
where $3g - 4 + 2n > 0$. 
We denote by $\mathcal{D}(V_L^\pm)$ the set of vertices of $\mathcal{C}(\Sigma_L)$ 
that are represented by simple closed curves bounding disks in $V_L^\pm$.  

\begin{definition}
The {\it distance} $d(M,L;\Sigma)$ of the bridge decomposition $(M,L; \Sigma)$ is defined by 
$d(M,L;\Sigma) := d_{\mathcal{C}(\Sigma_{L})}(\mathcal{D}(V^{+}_{L}),\mathcal{D}(V^{-}_{L}))$.  
\end{definition}

There are other variations, $d_{\mathcal{PD}}(M,L;\Sigma)$ and 
$d_\mathit{BS}(M,L;\Sigma)$, of the distance. 
The first one, $d_{\mathcal{PD}}(M,L;\Sigma)$, is defined as follows. 
Let $\mathcal{PD}(V_L^\pm)$ denote the set of all vertices of 
$\mathcal{C}(\Sigma_L)$  
that are represented by simple closed curves bounding disks in $V^\pm$ 
that intersect $L$ at most once. 
Then $d_{\mathcal{PD}}(M,L;\Sigma)$ is defined by 
$d_{\mathcal{PD}}(M,L;\Sigma):= d_{\mathcal{C}(\Sigma_{L})}(\mathcal{PD}(V^{+}_{L}),\mathcal{PD}(V^{-}_{L}))$. 
It is easily checked that the following inequality holds:  
\begin{eqnarray}\label{eq:d_PD}
d_{\mathcal{PD}}(M,L;\Sigma) \le d(M,L;\Sigma) \le d_{\mathcal{PD}}(M,L;\Sigma)+2. 
\end{eqnarray}

Furthermore, for a $(0,n)$-decomposition the following holds.  

\begin{proposition}[Jang {\cite[Proposition $1.2$]{Jan14}}]
\label{prop:d_PD}
Suppose that $M=S^3$ and the genus of $\Sigma$ is zero. Then, 
\begin{itemize} 
\item $d_{\mathcal{PD}}(M,L;\Sigma) = d(M,L;\Sigma)$ if $d_{\mathcal{PD}}(M,L;\Sigma) \ge 1$, and 
\item $d(M,L;\Sigma)=0$ or $1$ if $d_{\mathcal{PD}}(M,L;\Sigma) =0$. 
\end{itemize}
\end{proposition}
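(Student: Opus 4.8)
The plan is to prove the two inequalities $d(M,L;\Sigma)\le d_{\mathcal{PD}}(M,L;\Sigma)$ (under the hypothesis $d_{\mathcal{PD}}\ge 1$) and $d(M,L;\Sigma)\le 1$ (when $d_{\mathcal{PD}}=0$); since the reverse inequality $d_{\mathcal{PD}}\le d$ is immediate from $\mathcal D(V^{\pm}_{L})\subset\mathcal{PD}(V^{\pm}_{L})$, these give the proposition. Throughout, $\Sigma$ is a $2$-sphere and $\Sigma_{L}$ a sphere with $2n$ punctures; the standing assumption $3g-4+2n>0$ forces $n\ge 3$, so $\mathcal C(\Sigma_{L})$ is connected and $\mathcal D(V^{\pm}_{L})\ne\emptyset$.

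The heart of the matter is the following structural statement about once-punctured disks on a genus-$0$ side, which I would prove first: \emph{if $c$ is a vertex of $\mathcal C(\Sigma_{L})$ bounding a disk $D\subset V^{+}$ with $|D\cap L|=1$, and $\delta_{1},\delta_{2}$ are the two disks of $\Sigma-c$, then for each $i$ there is a vertex $e_{i}$ of $\mathcal C(\Sigma_{L})$ with $e_{i}\subset\delta_{i}$ and $e_{i}\in\mathcal D(V^{+}_{L})$}. Here is the argument I have in mind. The disk $D$ cuts the ball $V^{+}$ into balls $B_{1}\supset\delta_{1}$ and $B_{2}\supset\delta_{2}$. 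The point $D\cap L$ lies in the interior of one arc $t$ of the trivial tangle $V^{+}\cap L$, and since $t$ meets $D$ transversally in a single point, the two subarcs of $t$ cut off by $D\cap L$ lie one in $B_{1}$ and one in $B_{2}$, so $t$ has exactly one endpoint in each $\delta_{i}$; every other tangle arc is disjoint from $D$, hence lies entirely in $B_{1}$ or entirely in $B_{2}$. Therefore $|\delta_{i}\cap L|=1+2k_{i}$, where $k_{i}$ is the number of tangle arcs contained in $B_{i}$; this number is odd, and because $c$ is essential it is also $\ge 2$, hence $\ge 3$, so $k_{i}\ge 1$. Picking a tangle arc $u_{i}\subset B_{i}$ and letting $e_{i}\subset\delta_{i}$ be the curve surrounding exactly the two endpoints of $u_{i}$, the curve $e_{i}$ is essential because $n\ge 3$, and it bounds a disk in $V^{+}$ disjoint from $L$ because the tangle is trivial: after an ambient isotopy of $V^{+}$ taking $V^{+}\cap L$ to a standard position, $u_{i}$ is split off from the remaining arcs by a properly embedded disk disjoint from $L$, and pulling this disk back produces the disk bounded by $e_{i}$. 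The analogous statement holds for $V^{-}$.

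Granting this, the proposition should follow by elementary surgery on geodesics in $\mathcal C(\Sigma_{L})$. Suppose $d_{\mathcal{PD}}=m\ge 1$ and take a geodesic $c_{0},c_{1},\dots,c_{m}$ with $c_{0}\in\mathcal{PD}(V^{+}_{L})$, $c_{m}\in\mathcal{PD}(V^{-}_{L})$. If $c_{0}\notin\mathcal D(V^{+}_{L})$, then $c_{0}$ bounds a disk in $V^{+}$ meeting $L$ once; since $c_{1}$ is disjoint from $c_{0}$ it lies in one of $\delta_{1},\delta_{2}$, and the vertex $e_{i}$ produced by the lemma on the other side of $c_{0}$ is a point $e_{0}\in\mathcal D(V^{+}_{L})$ disjoint from $c_{1}$; otherwise set $e_{0}:=c_{0}$. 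Produce $e_{m}\in\mathcal D(V^{-}_{L})$ disjoint from $c_{m-1}$ in the same way. For $m\ge 2$ the walk $e_{0},c_{1},\dots,c_{m-1},e_{m}$ has length at most $m$, so $d\le m$, which together with $d_{\mathcal{PD}}\le d$ gives $d=m=d_{\mathcal{PD}}$. For $m=1$ one checks the few cases directly: if $c_{1}\in\mathcal D(V^{-}_{L})$ then $d\le d(e_{0},c_{1})\le 1$; otherwise apply the lemma to $c_{1}$ on the $V^{-}$ side and take the disk-vertex on the side of $c_{1}$ not containing $c_{0}$, which lies on the opposite side of $c_{0}$ from $e_{0}$ and so is disjoint from it, giving $d\le 1$. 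When $d_{\mathcal{PD}}=0$, pick $c_{0}\in\mathcal{PD}(V^{+}_{L})\cap\mathcal{PD}(V^{-}_{L})$; if $c_{0}\in\mathcal D(V^{+}_{L})\cap\mathcal D(V^{-}_{L})$ then $d=0$, and otherwise the lemma (applied on each side where needed) yields $e^{+}\in\mathcal D(V^{+}_{L})$ and $e^{-}\in\mathcal D(V^{-}_{L})$ on opposite sides of $c_{0}$, hence disjoint, whence $d\le 1$.

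The step I expect to cost the most work is the structural statement, and within it the two places where the genus-$0$, $S^{3}$ hypotheses are indispensable: the parity observation that $|\delta_{i}\cap L|$ is odd — so that each side of a once-punctured disk is forced to contain a \emph{complete} tangle arc — and the verification that surrounding such an arc gives a genuine compressing disk of $V^{+}-L$, not merely a cut disk, which needs the ambient isotopy to standard position to be set up carefully. The remaining curve-graph bookkeeping is routine; the only mild care is needed in separating out the short geodesics $m\le 1$ and the case $d_{\mathcal{PD}}=0$.
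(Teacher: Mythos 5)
Your overall reduction is sound, and it is in the spirit of what the paper relies on (the paper gives no proof at all here; it simply quotes Jang's Proposition 1.2, so the real question is whether your self-contained argument works). The parity observation is correct: each side $\delta_i$ of an essential curve bounding a once-punctured disk $D$ in the ball $V^{+}$ contains an odd number, hence at least three, of punctures, and therefore at least one complete arc of the tangle. Granting your structural lemma, the surgery on geodesics, including the separate treatment of $m=1$ and of $d_{\mathcal{PD}}=0$, goes through. The gap is precisely at the key step of the structural lemma.

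You ``let $e_i\subset\delta_i$ be the curve surrounding exactly the two endpoints of $u_i$'' and assert that it bounds a disk in $V^{+}-L$ because triviality of the tangle splits $u_i$ off from the other arcs. Two problems. First, there is no such thing as \emph{the} curve in $\delta_i$ surrounding those two punctures: there are infinitely many isotopy classes of such curves, and most of them do not compress. (In $\pi_1$ of the tangle exterior, a free group on meridians $x_1,\dots,x_n$, such a curve represents an element of the form $x_j\,w\,x_j^{-1}\,w^{-1}$, which is trivial only when $w$ commutes with $x_j$; by Dehn's lemma the curve bounds a disk only in that case.) Second, the disk that triviality actually provides --- the frontier of a neighborhood of a bridge disk for $u_i$ --- does have boundary encircling exactly the endpoints of $u_i$, but that boundary has no reason to lie in $\delta_i$: the bridge disks are chosen with no reference to $D$, so their shadows may cross $c=\partial D$ arbitrarily. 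What you need, and do not prove, is that a splitting disk for $u_i$ can be chosen \emph{disjoint from $D$}, so that its boundary is forced into $\delta_i$. That is the real content of the lemma (and of Jang's proposition): one must analyze the intersection of a splitting disk $E$ with $D$ and remove circles and arcs by innermost/outermost moves, using irreducibility of the tangle complement and the fact that an embedded sphere in the ball meets $L$ in an even number of points (this is what rules out the subdisks of $D$ containing the puncture). With that intersection argument supplied the lemma is true and your proof closes; as written, the crucial step is asserted rather than proved, and the difficulty you flag (setting up the isotopy to standard position) is not where the problem lies --- the problem is controlling the splitting disk relative to the given cut disk $D$.
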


We next define $d_{\mathit{BS}}(M,L;\Sigma)$, which was introduced by Bachman-Schleimer \cite{BS05}. 
For trivial $n$-tangles $(V^\pm,V^\pm \cap L)$, 
we denote by $\mathcal{B}(V^\pm,V^\pm \cap L)$ the set of all vertices $\alpha$ of $\mathcal{AC}(\Sigma_L)$ 
such that  
\begin{itemize}
\item $\alpha \in \mathcal{PD}(V_L^\pm)$, or 
\item $\alpha$ is an open arc in $\Sigma_L$ such that 
$\partial \Cl (\alpha; \Sigma) \subset \Sigma \cap L$ and 
	$\Cl (\alpha; \Sigma)$ cobounds a disk in $V^\pm$ with 
	an arc of $V^\pm \cap L$. 
\end{itemize}
We define $d_{\mathit{BS}}(M,L;\Sigma)$ by the distance between two set 
$\mathcal{B}(V^+,V^+ \cap L)$ and $\mathcal{B}(V^-,V^- \cap L)$ 
in the arc and curve graph $\mathcal{AC}(\Sigma_L)$. 
By the argument of the proof of the inequality $(1)$ in p.480 of 
Korkmaz-Papadopoulos \cite{KP10}, we have 
\begin{eqnarray}\label{eq:d_BS}
\frac{1}{2}d(M,L;\Sigma) \le d_\mathrm{BS}(M,L;\Sigma) \le d(M,L;\Sigma).  
\end{eqnarray}
See also \cite{BCJTT17}. 
We summarize a few facts needed in Sections \ref{sec:upper_bound_for_distance} and \ref{sec:proof_of_main_thm}.  
The following lemma is an extension of Haken's lemma \cite{Hak68}. 

\begin{lemma}[{\cite[Lemma $4.1$]{BS05}}]\label{lem:Haken}
Let $(M,L;\Sigma)$ be a bridge decomposition of a link $L$ in a closed orientable $3$-manifold $M$. 
If $M_L$ contains an essential $2$-sphere, or 
if there exists a $2$-sphere in $M$ that intersects $L$ transversely at a single point, 
then $d_{\mathcal{PD}}(M,L;\Sigma)=0$.  
\end{lemma}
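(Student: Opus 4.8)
The plan is to adapt the classical proof of Haken's lemma to the setting with marked points. First I would pass to the link exterior $M_L := \Cl(M-\Nbd(L))$; since $V^{\pm}\cap L$ are trivial tangles, the complements $V^{\pm}_L := \Cl(V^{\pm}-\Nbd(L))$ are irreducible (in fact handlebodies), and the bridge decomposition induces a splitting $M_L = V^{+}_L\cup_{\Sigma_L}V^{-}_L$ along the $2n$-punctured surface $\Sigma_L$. Under the first hypothesis let $S\subset M_L$ be an essential $2$-sphere; under the second let $S\subset M$ be the given $2$-sphere meeting $L$ transversely in one point. In either case isotope $S$ so that it is transverse to $\Sigma$, so that the point of $S\cap L$ (if any) lies in the interior of $V^{+}$ off $\Sigma$, and so that the number of circles in $S\cap\Sigma_L$ is minimal among all such spheres.

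Next I would carry out the usual innermost-disk analysis. If $S\cap\Sigma_L=\emptyset$, then $S$ lies in one of the irreducible handlebodies $V^{\pm}_L$, or, under the second hypothesis, in $V^{+}$, where it would bound a ball meeting $L$ an even number of times; in all cases this contradicts the choice of $S$. So $S\cap\Sigma_L$ is a nonempty family of circles, each component of $S\cap V^{\pm}_L$ is planar, and an Euler-characteristic count forces one such component to be a disk $D$, say $D\subset V^{+}_L$, with $\partial D\subset\Sigma_L$ and $|D\cap L|\le 1$. Minimality should force $\partial D$ to be essential in $\Sigma_L$: otherwise $\partial D$ bounds in $\Sigma$ a disk or a once-punctured disk $E$, and since $V^{+}\cap L$ is a trivial tangle the $2$-sphere $D\cup E$ bounds a ball in $V^{+}$ meeting $L$ in at most an unknotted arc, along which $S$ can be isotoped either to decrease $|S\cap\Sigma_L|$ or to produce a sphere meeting $L$ once with fewer intersection circles, contradicting minimality after an induction on $|S\cap\Sigma_L|$. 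A symmetric innermost/outermost exchange argument performed inside the handlebodies $V^{\pm}_L$ --- exactly as in the proof of Haken's lemma, now boundary-compressing or discarding the non-disk planar pieces of $S$ on the $V^{-}_L$ side --- should then produce, bounded by $\partial D$ in $V^{-}_L$, either a disk or an annulus joining $\partial D$ to a meridian of $L$; capping such an annulus with a meridian disk of $\Nbd(L)$ yields a disk in $V^{-}$ meeting $L$ exactly once with boundary $\partial D$. In either outcome $\partial D$ is an essential curve of $\Sigma_L$ lying in $\mathcal{PD}(V^{+}_L)\cap\mathcal{PD}(V^{-}_L)$, so $d_{\mathcal{PD}}(M,L;\Sigma)=0$.

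The step I expect to be the main obstacle is the bookkeeping forced by the marked points. In the unpunctured case one needs only that intersection circles be essential in $\Sigma$; here they must be essential in the \emph{punctured} surface $\Sigma_L$, so the ``once-punctured disk'' alternative in the reduction step genuinely occurs. This is precisely why the two hypotheses are bundled together: surgering an essential sphere of $M_L$ along a once-punctured disk of $\Sigma_L$ produces a sphere meeting $L$ once, and such a sphere can be surgered back, so the minimization cannot be carried out within a single hypothesis --- the induction must range over both cases at once. A secondary technical point is that the exchange argument in a handlebody can leave essential annuli rather than disks; this is handled by the standard iterated surgery and boundary-compression procedure, again using the triviality of $V^{\pm}\cap L$ to ensure that the resulting spheres are strictly simpler.
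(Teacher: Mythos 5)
The paper itself does not prove this lemma: it simply quotes Bachman--Schleimer \cite{BS05} (Lemma 4.1), adding only the remark that their argument, written for knots, goes through for links. So your proposal has to stand as an independent proof, and as such it is only half complete. The first half is essentially sound: passing to the exteriors $V^{\pm}_L$ (handlebodies, hence irreducible), minimizing $|S\cap\Sigma_L|$, and surgering along disks or once-punctured disks of $\Sigma_L$ does reduce to a sphere, still essential in $M_L$ or meeting $L$ transversely once, all of whose intersection curves with $\Sigma$ are essential in $\Sigma_L$; your observation that surgery along a once-punctured disk converts the first hypothesis into the second, so the minimization must range over both hypotheses at once, is exactly the right bookkeeping. (Two repairable points there: the inessential curve should be chosen innermost in $\Sigma_L$ rather than innermost in $S$, so that the surgery disk $E$ has interior disjoint from $S$ --- your ``isotope $S$ across the ball bounded by $D\cup E$'' is not available when $E$ contains other curves of $S\cap\Sigma$ --- and in the unpunctured subcase one must invoke the standard fact that if both surgered spheres bound balls in $M_L$ then so does $S$.)

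The genuine gap is the final step. An innermost disk $D$ of $S$ gives one curve $\partial D\in\mathcal{PD}(V^{+}_{L})$, say, but $d_{\mathcal{PD}}(M,L;\Sigma)=0$ requires a \emph{single} essential curve of $\Sigma_L$ bounding disks or once-punctured disks on \emph{both} sides, and your construction so far produces nothing on the $V^{-}$ side: both innermost disks of $S$ may well lie in $V^{+}$ (e.g.\ when the pieces of $S$ are disk--annulus--disk), so even $d_{\mathcal{PD}}\le 1$ is not yet established. The sentence asserting that ``a symmetric innermost/outermost exchange argument \dots exactly as in the proof of Haken's lemma, now boundary-compressing or discarding the non-disk planar pieces on the $V^{-}_L$ side'' yields a disk or meridional annulus in $V^{-}_L$ bounded by $\partial D$ is precisely the hard content of Haken's lemma in the punctured setting, i.e.\ of the result being cited, and as described it is not yet a procedure: boundary-compressing a planar piece of $S\cap V^{-}_L$ modifies $S$, can destroy the property that $\partial D$ bounds a disk component on the $V^{+}$ side, can reintroduce curves inessential in $\Sigma_L$ (forcing the first stage to be rerun), and the compressions may be forced into $\partial\Nbd(L)$; making this terminate requires a genuine decreasing complexity, which is what the actual proofs (Haken's exchange argument, thin-position versions, and Bachman--Schleimer's adaptation to bridge position) supply. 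Either carry out that exchange argument in detail in the presence of the trivial tangles, or do what the paper does and simply cite \cite{BS05}, checking that the argument there extends from knots to links.
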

\begin{remark}
\cite[Lemma 4.1]{BS05} is stated only for knots, but their arguments hold for links.
\end{remark}

Corollary $6.2$ of Bachman-Schleimer \cite{BS05} says that 
if $d_{\mathrm{BS}}(M,L;\Sigma) \geq 3$, the complement of $L$ admits a complete hyperbolic 
structure of finite volume. 
(Again, \cite[Corollary 6.2]{BS05} is stated for knots, but their arguments are valid even for links.) 
Combining this fact and the inequality (\ref{eq:d_BS}), we have the following.  

\begin{theorem}\label{thm:hyperbolic_structure}
Let $(M,L;\Sigma)$ be a bridge decomposition of a link $L$ in a closed orientable $3$-manifold $M$. 
If $d(M,L;\Sigma) \geq 5$, then $M_L$ admits a complete hyperbolic 
structure of finite volume. 
\end{theorem}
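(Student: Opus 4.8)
The plan is to establish Theorem~\ref{thm:hyperbolic_structure} by combining the cited hyperbolicity result of Bachman--Schleimer with the inequality~(\ref{eq:d_BS}) relating $d(M,L;\Sigma)$ to $d_{\mathit{BS}}(M,L;\Sigma)$. The logical skeleton is short: the hypothesis $d(M,L;\Sigma)\geq 5$ feeds into~(\ref{eq:d_BS}) to force $d_{\mathit{BS}}(M,L;\Sigma)\geq 3$, and then \cite[Corollary~6.2]{BS05}, together with the remark that its argument carries over from knots to links, gives a complete hyperbolic structure of finite volume on $M_L$. So the proof is essentially a two-line chain of implications once the ingredients are in place.

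First I would invoke the left-hand inequality of~(\ref{eq:d_BS}), namely $\tfrac{1}{2}d(M,L;\Sigma)\leq d_{\mathit{BS}}(M,L;\Sigma)$. Since $d(M,L;\Sigma)\geq 5$ by hypothesis, this yields $d_{\mathit{BS}}(M,L;\Sigma)\geq \tfrac{5}{2}$, and because $d_{\mathit{BS}}$ takes integer values (it is a simplicial distance in $\mathcal{AC}(\Sigma_L)$), we conclude $d_{\mathit{BS}}(M,L;\Sigma)\geq 3$. One should note in passing that for~(\ref{eq:d_BS}) to be meaningful we need $\mathcal{AC}(\Sigma_L)$ to be connected and the relevant vertex sets nonempty; under the hypothesis $d(M,L;\Sigma)\geq 5$ the decomposition is nontrivial enough that the standing assumption $3g-4+2n>0$ from Section~\ref{sec:preliminaries_dist} is in force, so these graphs behave well.

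Next I would apply \cite[Corollary~6.2]{BS05}: whenever $d_{\mathit{BS}}(M,L;\Sigma)\geq 3$, the exterior $M_L$ of $L$ admits a complete hyperbolic structure of finite volume. As remarked in the excerpt, Bachman--Schleimer state their results for knots, but their proofs — which run through showing $M_L$ contains no essential spheres, disks, tori, or annuli and then appealing to Thurston's geometrization for Haken manifolds (equivalently, to the hyperbolization of the interior of a compact atoroidal, acylindrical, $\partial$-irreducible $3$-manifold) — do not use that $L$ is connected, so they apply verbatim to links. Combining this with the previous paragraph completes the argument.

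The only point requiring any care — and it is minor — is making sure the rounding step $d_{\mathit{BS}}\geq\tfrac{5}{2}\Rightarrow d_{\mathit{BS}}\geq 3$ is legitimate, i.e.\ that $d_{\mathit{BS}}$ is genuinely integer-valued on the pair of vertex sets in question, which it is since both $\mathcal{B}(V^+,V^+\cap L)$ and $\mathcal{B}(V^-,V^-\cap L)$ are sets of vertices of the simplicial graph $\mathcal{AC}(\Sigma_L)$. There is no substantive obstacle here; the content of the theorem is entirely imported from \cite{BS05}, and the present statement is merely a repackaging in terms of the distance $d(M,L;\Sigma)$ that the rest of the paper uses. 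If anything, the "hard part" is purely expository: citing~(\ref{eq:d_BS}) — itself attributed to the proof of inequality~(1) on p.~480 of Korkmaz--Papadopoulos \cite{KP10} — and the knot-to-link extension of \cite[Corollary~6.2]{BS05} with enough precision that the reader is comfortable it applies in the stated generality.
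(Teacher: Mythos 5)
Your proposal is correct and follows exactly the route the paper takes: the left-hand side of inequality~(\ref{eq:d_BS}) together with integrality gives $d_{\mathit{BS}}(M,L;\Sigma)\geq 3$, and then \cite[Corollary~6.2]{BS05}, extended from knots to links as the paper remarks, yields the hyperbolic structure on $M_L$. No substantive differences from the paper's argument.
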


%Furthermore, 
%by Corollary~$6.2$ of \cite{BS05} and Proposition~\ref{prop:d_PD}, we have  
%

\begin{remark}
Here is a subtle remark on the various notion of distances introduced above. 
In \cite{Jan14}, two notions of distance of bridge decompositions are discussed. 
One is $d$, which is denoted by $d_T$ in \cite{Jan14}, and the other is $d_{\mathcal{PD}}$, 
which is denoted by $d_{\mathit{BS}}$ in the same paper. 
The important thing to note is that the definition of $d_{\mathit{BS}}$ in \cite{Jan14} 
is different from the original one by Bachman-Schleimer \cite{BS05}. 
Then, in \cite[Theorem 5.1]{IM17}, it is claimed that if $(S^3,L;\Sigma)$ is 
a $(0,n)$-decomposition of a link $L$ in $S^3$, where 
$n \geq 3$, then the complement of $L$ admits a complete hyperbolic structure of finite volume. 
The proof in that paper bases on two results. 
One is \cite[Corollary 6.2]{BS05}. 
The other is, however, not a relationship between $d$ and $d_{\mathit{BS}}$ 
but Proposition \ref{prop:d_PD} above (literally this is described as 
a relationship between $d_T$ and $d_{\mathit{BS}}$ in \cite{Jan14}). 
Thus, we do not have a reasonable explanation of \cite[Theorem 5.1]{IM17}. 
If \cite[Theorem 5.1]{IM17} is still valid, then 
we can improve the distance estimation of Theorem \ref{thm:main} 
for $(0,n)$-decompositions of links in $S^3$. 
\end{remark}

%\begin{theorem}\label{thm:hyperbolic_genus0}
%Let $n \geq 3$ and $(S^3,L;\Sigma)$ a $(0,n)$-decomposition of a link $L$ in $S^3$. 
%If $d(M,L;\Sigma) \geq 3$, the complement of $L$ admits a complete hyperbolic structure of finite volume. 
%\end{theorem}

\subsection{Goeritz groups}

Let $M$ be an orientable manifold, and $Y_{1},Y_{2}\ldots,Y_{k}$ (possibly empty) subsets of $M$. 
Let $\mathrm{Diff}(M,Y_{1},Y_{2},\ldots,Y_{k})$ denote the group of 
orientation-preserving self-diffeomorphisms of $M$ 
that send $Y_{i}$ to itself for $i=1,2,\ldots,k$. 
The {\it mapping class group} $\mathrm{MCG}(M,Y_{1},\ldots,Y_{k})$ of the 
$(k+1)$-tuple $(M,Y_{1},Y_{2}\ldots,Y_{k})$ 
is defined to be the group of connected components of $\mathrm{Diff}(M,Y_{1},Y_{2}\ldots,Y_{k})$. 

\begin{definition}
For a bridge decomposition $(M,L)=(V^{+}, V^{+} \cap L) \cup_{\Sigma} (V^{-},V^{-} \cap L)$, 
the mapping class group $\mathrm{MCG}(M,V^{+},L)$ is called the {\it Goeritz group}, and 
it is denoted by $\mathcal{G}(M,L;\Sigma)$.   
\end{definition}

Let $(M,L;\Sigma)$ be a bridge decomposition of a link $L$ in a closed orientable $3$-manifold $M$. 
Since the natural map 
$\mathcal{G}(M,L;\Sigma) \rightarrow \mathrm{MCG}(\Sigma,\Sigma \cap L)$ 
obtained by restricting the maps of concern to $\Sigma$ is injective, 
the Goeritz group can be thought of a subgroup of $\mathrm{MCG}(\Sigma,\Sigma \cap L)$.  
Thus, we can write as 
$$\mathcal{G}(M,L;\Sigma)=\mathrm{MCG}(V^{+},V^{+} \cap L) \cap \mathrm{MCG}(V^{-},V^{-} \cap L) \subset 
\mathrm{MCG}(\Sigma,\Sigma \cap L).$$

\section{Sweep-outs}\label{sec:sweep-outs}
We review the basic theory of sweep-outs.   
The main references of this section are Kobayashi-Saeki \cite{KS00}
 and Johnson \cite{Joh10_JTP}.  
In the following,  
let $M$ be a closed orientable $3$-manifold, 
$L$ a link in $M$, 
and $(M,L;\Sigma)$ a $(\mathrm{genus}(\Sigma),n)$-decomposition of $L$ throughout.

\begin{definition}
A function $f:M \rightarrow [-1,1]$ is said to be a {\it sweep-out 
of $(M, L)$ associated with the decomposition $(M, L; \Sigma)$} if 
\begin{itemize}
\item for all $s \in (-1,1)$, $f^{-1}(s)$ is a bridge surface of $L$ 
	and the bridge decomposition $(M,L;f^{-1}(s))$ is equivalent to 
	$(M,L;\Sigma)$; and 
\item $f^{-1}(1)$ and $f^{-1}(-1)$ are finite graphs, which are called {\it spines}, embedded in $M$. 
\end{itemize}
\end{definition}

We note that any bridge decomposition admits a sweep-out.  
For simplicity, we shall always assume further that the spines $f^{-1}(\pm 1)$ are uni-trivalent graphs, 
and the intersection of the spines and $L$ is exactly the set of vertices whose valency is one. 
See Figure~\ref{fig:sweeping_out_by_bridge_surface}. 

\begin{figure}[htbp]
\begin{center}
\includegraphics[width=5cm,clip]{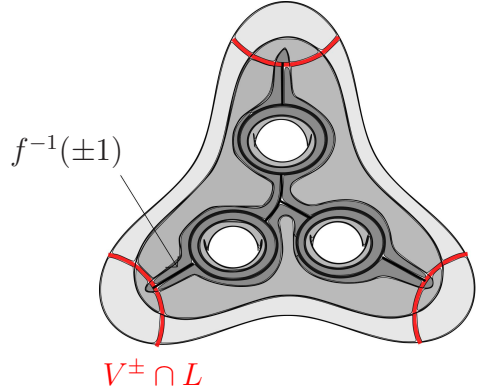}
\begin{picture}(400,0)(0,0)
\put(95,85){$f^{-1}(\pm 1)$}
\put(130,0){{\color{red}$V^{\pm} \cap L$}}
\end{picture}
\caption{A spine in $V^{\pm}$.}
\label{fig:sweeping_out_by_bridge_surface}
\end{center}
\end{figure}

A smooth map $F$ from a $3$-manifold $N$ into $\mathbb{R}^{2}$ is said to be {\it stable} 
if there exists a neighborhood $U(F)$ of $F$ 
in the space of smooth maps $C^{\infty}(N,\mathbb{R}^{2})$ with the following property: 
for any $G \in U(F)$, 
there exist diffeomorphisms 
$\varphi:N \rightarrow N$ and $\psi:\mathbb{R}^{2} \rightarrow \mathbb{R}^{2}$ 
satisfying $G \circ \varphi=\psi \circ F$. 
The image of the set of singular points of the stable map is called the {\it discriminant set}. 

Let $f$ and $g$ be sweep-outs of $(M, L)$. 
Due to Kobayashi-Saeki \cite{KS00}, 
the map $f \times g:M \rightarrow [-1,1] \times [-1,1]$ can be perturbed 
so that $f \times g$ is stable in the complement of spines of $f$ and $g$. 
In the following, whenever we consider the product of sweep-outs, 
we slightly perturb it to be stable. 
Let $\Gamma$ be the closure in $ [-1,1] \times [-1,1]$ of 
the union of the discriminant set of $f \times g$ and the image of $L$ under the map $f \times g$. 
Then $\Gamma$ is naturally equipped with a structure of a finite graph of valency at most four.
Such a finite graph is called the {\it (Rubinstein-Scharlemann) graphic defined by $f \times g$}.  

Each point $(s,t)$ in the interior of the square $[-1,1] \times [-1,1]$ corresponds to the intersection 
of two level surfaces $\Sigma_s := f^{-1}(s)$ and $\Sigma'_t := g^{-1}(t)$. 
We note that the surfaces $\Sigma_s$ and $\Sigma'_t$ always intersect $L$ transversely by definition. 
If the point $(s,t)$ lies in the complementary region of the graphic $\Gamma$, 
then the surfaces $\Sigma_s$ and $\Sigma'_t$ intersect transversely 
and $\Sigma_s \cap \Sigma_t \cap L = \emptyset$. 
If $(s,t)$ lies in the interior of an edge of $\Gamma$, 
then either 
\begin{itemize}
\item 
$\Sigma_s$ and $\Sigma'_t$ share a single tangent point, 
and that point is a non-degenerate critical point of 
both $f|_{\Sigma'_t}$ and $g|_{\Sigma_s}$, see Figure~\ref{fig:edge_of_graphic}~(i) and (ii); or 
\item
$\Sigma_s$ and $\Sigma'_t$ intersect transversely, and $\Sigma_s \cap \Sigma'_t \cap L$ is a 
single point, see Figure~\ref{fig:edge_of_graphic}~(iii).
\end{itemize}
\begin{figure}[htbp]
\begin{center}
\includegraphics[width=12cm,clip]{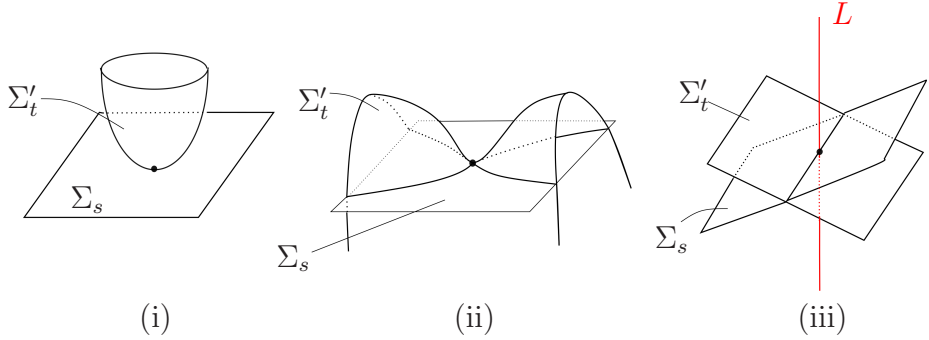}
\begin{picture}(400,0)(0,0)
\put(73,0){(i)}
\put(192,0){(ii)}
\put(322,0){(iii)}
\put(23,83){$\Sigma'_t$}
\put(47,45){$\Sigma_s$}
\put(133,82){$\Sigma'_t$}
\put(125,23){$\Sigma_s$}
\put(277,85){$\Sigma'_t$}
\put(268,30){$\Sigma_s$}
\put(335,115){$\textcolor{red}{L}$}
\end{picture}
\caption{The surfaces $\Sigma_s$ and $\Sigma'_t$ when $(s,t)$ lies in the interior of an edge of the graphic.}
\label{fig:edge_of_graphic}
\end{center}
\end{figure}
If $(s,t)$ is at a $4$-valent vertex of $\Gamma$, then either
\begin{itemize}
\item 
$\Sigma_s$ and $\Sigma'_t$ share exactly two tangent points, and 
those points are non-degenerate critical points of both 
$f|_{\Sigma'_t}$ and $g|_{\Sigma_s}$; 
\item
$\Sigma_s$ and $\Sigma'_t$ share a single tangent point, and that point is a 
non-degenerate critical point of both $f|_{\Sigma'_t}$  and $g|_{\Sigma_s}$. 
Further, $L$ intersects $\Sigma_s \cap \Sigma'_t$ at a point where 
$\Sigma_s$ and $\Sigma'_t$ intersect transversely; or 
\item
$\Sigma_s$ and $\Sigma'_t$ intersect transversely, and $\Sigma_s \cap \Sigma'_t \cap L$ consists of 
exactly two points.
\end{itemize}
If $(s,t)$ is at a $2$-valent vertex of $\Gamma$, then 
$\Sigma_s$ and $\Sigma'_t$ share a single tangent point, and that point is a degenerate critical point 
of both $f|_{\Sigma'_t}$ and $g|_{\Sigma_s}$. See Figure~\ref{fig:2-valent_vertex_of_graphic}. 
\begin{figure}[htbp]
\begin{center}
\includegraphics[width=3.5cm,clip]{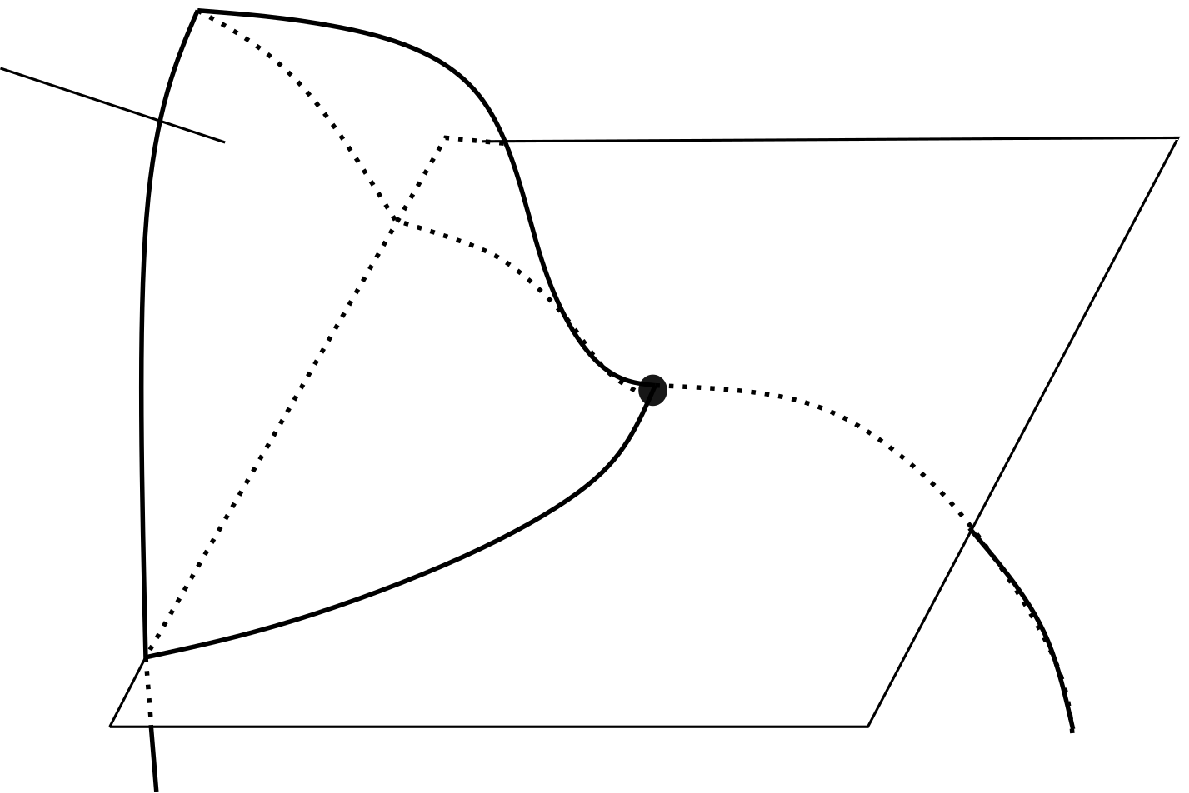}
\begin{picture}(400,0)(0,0)
\put(140,70){$\Sigma'_t$}
\put(210,25){$\Sigma_s$}
\end{picture}
\caption{The surfaces $\Sigma_s$ and $\Sigma'_t$ when $(s,t)$ is at a $2$-valent vertex of the graphic.}
\label{fig:2-valent_vertex_of_graphic}
\end{center}
\end{figure}
Each $1$- or $3$-valent vertex 
is in the boundary of the square, and 
it corresponds to the point 
where the level surface of one of the two sweep-outs is tangent to the spine of the other sweep-out.

\begin{definition}
The graphic defined by $f \times g$ is said to be {\it generic} 
if $f \times g$ is stable in the complement of the spines, and
any vertical or horizontal arc in $[-1,1] \times [-1,1]$ contains at most one vertex of the graphic.  
\end{definition}

The following is Lemma~$34$ of Johnson \cite{Joh10_JTP}.  

\begin{lemma}\label{lemm:generic} 
Let $f$ and $g$ be sweep-outs associated to the bridge decomposition $(M,L;\Sigma)$.    
Let $\{\Phi_{r}:M \rightarrow M\}_{r \in [0,1]}$ be an ambient 
isotopy such that 
$\Phi_{0}=\mathrm{id}_{M}$ and $\Phi_{r}(L)=L$ for all $r \in [0,1]$. 
Set $g_r:=g \circ \Phi_r$ for $r \in [0,1]$. 
Then we can perturb $\{\Phi_{r}\}_{r \in [0,1]}$ slightly, if necessary, so that 
the graphic defined by $f \times g_{r}$ is generic for all but finitely many $r \in [0,1]$. 
At each non-generic $r \in [0,1]$, the graphic fails to be generic due to one of the following two reasons:  
\begin{itemize} 
\item there exists a single vertical or horizontal arc in $[-1,1] \times [-1,1]$ containing 
	two vertices of the graphic, or 
\item the map $f \times g_{r}$ is not stable  in the complement of their spines.  
	$($This case corresponds to the six types of local moves 
	shown in Figure~$5$ of \cite{Joh11}.$)$ 
\end{itemize} 
%Let $\{\Phi_{r}:M \rightarrow M \mid r \in [0,1]\}$ be an isotopy such that 
%$\Phi_{0}=\mathrm{id}_{M}$ and 
%$\Phi_{r}(L)=L$ for all $r \in [0,1]$. 
%Set $g_{r}:=g \circ \Phi_{r}$ for $r \in [0,1]$.  
%Then, we may replace $\{\Phi_{r}\}_{r \in [0,1]}$ 
%so that the graphic defined by $f \times g_{r}$ is generic for all but finitely many %values of $r$. 
%At the non-generic values of $r$, the graphic fails to be generic due to one of the following two reasons:  
\end{lemma}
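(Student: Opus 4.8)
The plan is to follow Johnson's transversality argument for sweep-outs and adapt it verbatim to the presence of the link $L$. First I would fix attention on the one-parameter family of stable maps $\{f \times g_r\}_{r \in [0,1]}$, where each $g_r = g \circ \Phi_r$ and $\Phi_r$ preserves $L$ setwise for all $r$; in particular each $g_r$ is again a sweep-out associated to $(M,L;\Sigma)$, so the discriminant analysis recorded in the preceding discussion applies uniformly. The key point is that, away from the spines, $f \times g_r$ is a stable map into $\Real^2$, and the set of stable maps is open and dense, with the complement stratified by codimension-one strata corresponding to the finitely many local bifurcations classified in \cite{Joh11} (the ``six types of local moves''). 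Thus, after a small perturbation of the isotopy $\{\Phi_r\}$ — keeping $\Phi_0 = \id_M$ and $\Phi_r(L) = L$ throughout, which is possible because the perturbation can be taken inside the group of diffeomorphisms preserving $L$ — the path $r \mapsto f \times g_r$ meets the codimension-one ``unstable'' locus transversely, hence at only finitely many parameter values, and meets no higher-codimension strata at all.

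Next I would handle the second, more elementary, source of non-genericity: the requirement that no vertical or horizontal arc in $[-1,1]\times[-1,1]$ contains two vertices of the graphic $\Gamma_r$ defined by $f \times g_r$. Two vertices of $\Gamma_r$ share a vertical (resp.\ horizontal) arc exactly when two singular values of $g|_{\Sigma_s}$ (resp.\ $f|_{\Sigma'_t}$) coincide for some level $s$ (resp.\ $t$). For a fixed generic $r$ this is a codimension-one condition in the parameter, so again a small perturbation of $\{\Phi_r\}$ ensures that for all but finitely many $r$ no such coincidence occurs, and at the finitely many exceptional $r$ exactly one such coincidence occurs. Here one uses that the critical points of $g_r|_{\Sigma_s}$ and $f|_{\Sigma'_t}$ vary smoothly in $r$ away from the spines, and that the finitely many vertices of $\Gamma_r$ — coming from tangencies of $\Sigma_s$ and $\Sigma'_t$, from double points of $L$ on $\Sigma_s \cap \Sigma'_t$, and from tangencies of a level surface of one sweep-out with the spine of the other — all depend smoothly (in fact semi-algebraically) on $(s,t,r)$.

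Combining the two perturbations, one obtains a single perturbed family for which the graphic is generic for all but finitely many $r$, and at each exceptional $r$ it fails for exactly one of the two stated reasons: either a single vertical or horizontal arc carries two vertices, or $f \times g_r$ is unstable in the complement of the spines, the latter realized by one of the six local moves. I expect the main obstacle to be purely bookkeeping rather than conceptual: one must verify that the perturbations can be carried out \emph{simultaneously} without the two genericity conditions interfering, and that the local model at each exceptional $r$ genuinely reduces to Johnson's list once the link strata of $\Gamma_r$ (the images of $L$, and its double points with $\Sigma_s \cap \Sigma'_t$) are incorporated. Since $L$ meets every bridge surface transversely and the spines meet $L$ only at univalent vertices, the new $L$-related features of $\Gamma_r$ are transverse to the sweep-out structure and contribute no new bifurcation types beyond those already handled by the stability of $f \times g_r$ near $L$, so the classification of \cite{Joh11} carries over unchanged. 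This is exactly the content of Lemma~$34$ of \cite{Joh10_JTP}, and the argument there applies with only the cosmetic modification of restricting all perturbations to diffeomorphisms fixing $L$.
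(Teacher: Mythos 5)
Your sketch is consistent with the paper's treatment: the paper offers no proof of this lemma at all, quoting it as Lemma~34 of Johnson \cite{Joh10_JTP}, and what you write is exactly the stability/transversality argument underlying that citation, with the only genuinely new point being that all perturbations are taken inside the group of diffeomorphisms preserving $L$. The one issue you assert rather than verify --- that the strata of the graphic coming from $L$ (tangencies of its image with the discriminant, passages through cusps or crossings) create no bifurcation types beyond the two listed failure modes of \cite{Joh11} --- is likewise left implicit in the paper, so your proposal matches the paper's level of rigor and approach.
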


Let $f$ and $g$ be sweep-outs associated to $(M,L;\Sigma)$. 
Let $t,s \in (-1,1)$. 
%From now until the end of Section \ref{sec:upper_bound_for_distance}, 
Set 
$\Sigma_{s}:=f^{-1}(s)$, 
$\Sigma_{t}^{\prime}:=g^{-1}(t)$,
${V_{s}}^{-}:=f^{-1}([-1,s])$, ${V_{s}}^{+}:=f^{-1}([s,1])$, 
${V^{\prime}_{t}}^{-}:=g^{-1}([-1,t])$ and ${V^{\prime}_{t}}^{+}:=g^{-1}([t,1])$. 
%\begin{itemize}
%\item $\Sigma_{t}:=f^{-1}(t)$, 
%\item $\Sigma_{s}^{\prime}:=g^{-1}(s)$,
%\item ${V_{t}}^{+}:=f^{-1}([-1,t])$ and ${V_{t}}^{+}:=f^{-1}([t,1])$, 
%\item ${V^{\prime}_{s}}^{-}:=g^{-1}([-1,s])$ and ${V^{\prime}_{s}}^{+}:=g^{-1}([s,1])$. 
%\end{itemize} 

\begin{definition}
We say that $\Sigma_{s}$ is {\it mostly above} ({\it mostly below}, respectively) 
$\Sigma_{t}^{\prime}$ if 
each component of 
$\Sigma_{s} \cap {V^{\prime}_{t}}^{-}$ ($\Sigma_{s} \cap {V^{\prime}_{t}}^{+}$, respectively) is
contained in a disk with at most one puncture in $\Sigma_{s} - L$.   
\end{definition}

Let $\mathscr{R}_{a}$ ($\mathscr{R}_{b}$, respectively) 
denote the set of all points $(s,t) \in [-1,1] \times [-1,1]$ such that 
$\Sigma_{s}$ is mostly above (mostly below, respectively) $\Sigma_{t}^{\prime}$. 
The regions $\mathscr{R}_{a}$ and $\mathscr{R}_{b}$ are bounded by the edges of the graphic.   
Note that a point $(s,t)$ near $[-1,1] \times \{-1\}$ is labeled by $\mathscr{R}_{a}$ 
because ${V^{\prime}_{t}}^{-}$ lies within a small neighborhood the spine of $g$, and 
the all intersections of ${V^{\prime}_{t}}^{-}$ and $\Sigma_{s}$ must consist of disks. 
Similarly, a point $(s,t)$ near $[-1,1] \times \{1\}$ is labeled by $\mathscr{R}_{b}$. 
Also, by definition, both regions $\mathrm{Cl}(\mathscr{R}_{a})$ 
and $\mathrm{Cl}(\mathscr{R}_{b})$ are {\it vertically convex},  
that is, 
if a point $(s,t)$ is in $\mathrm{Cl}(\mathscr{R}_{a})$ ($\mathrm{Cl}(\mathscr{R}_{b})$, respectively), 
then so is $(s,t')$ for any $t^{\prime} \le t$ ($t^{\prime} \ge t$, respectively).  

\begin{lemma}\label{lem:intersection_of_regions}
Suppose that $(\mathrm{genus}(\Sigma),n)\not=(0,1),(0,2),(1,1)$.  
Then
the closure of the regions $\mathscr{R}_{a}$ and $\mathscr{R}_{b}$ are disjoint. 
\end{lemma}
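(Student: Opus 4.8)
The plan is to prove the contrapositive flavored statement directly: assume some point $(s,t)$ lies in $\Cl(\mathscr{R}_a) \cap \Cl(\mathscr{R}_b)$ and derive that $(\mathrm{genus}(\Sigma), n)$ must be one of the three excluded pairs. At such a point, the surfaces $\Sigma_s$ and $\Sigma'_t$ are in some (possibly tangential) position, but the defining conditions of $\mathscr{R}_a$ and $\mathscr{R}_b$ are closed conditions, so we may pass to the limit: every component of $\Sigma_s \cap {V'_t}^-$ lies in a disk-with-at-most-one-puncture in $\Sigma_s - L$, and simultaneously every component of $\Sigma_s \cap {V'_t}^+$ does too. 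The curves of $\Sigma_s \cap \Sigma'_t$ (away from tangencies) cut $\Sigma_s$ into the pieces lying in ${V'_t}^+$ and those lying in ${V'_t}^-$, and each such piece is planar with at most one puncture. So $\Sigma_s - L$ is assembled by gluing planar, at-most-once-punctured pieces along a collection of disjoint simple closed curves.

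\textbf{Key steps.} First I would reduce to the transverse case: perturb $t$ slightly (using vertical convexity of both $\Cl(\mathscr{R}_a)$ and $\Cl(\mathscr{R}_b)$, one moves the witness point to a nearby point still in the intersection of the closures, or directly work with a limit configuration in which $\Sigma_s \pitchfork \Sigma'_t$) so that $c := \Sigma_s \cap \Sigma'_t$ is a (possibly empty) family of disjoint essential-or-inessential simple closed curves in $\Sigma_s - L$, cutting $\Sigma_s$ into subsurfaces $P_1, \dots, P_k$, where those with a "$+$" label lie in ${V'_t}^+$ and those with a "$-$" label lie in ${V'_t}^-$, and each $P_i$ is planar with $\leq 1$ puncture from $L$. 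Second, I would run an Euler-characteristic / puncture count. Let $\Sigma$ have genus $g$ and $2n$ punctures (since $|L \cap \Sigma| = 2n$ for an $n$-bridge decomposition); then $\chi(\Sigma - L) = 2 - 2g - 2n$. On the other hand $\chi(\Sigma - L) = \sum_i \chi(P_i)$, and each $P_i$, being a planar surface with $b_i \geq 1$ boundary circles (the pieces of $c$) and $\leq 1$ puncture, satisfies $\chi(P_i) \leq 1$, with $\chi(P_i) = 1$ only when $P_i$ is an unpunctured disk ($b_i = 1$). Third, I would combine this with the genus constraint: gluing planar pieces can only produce a genus-$g$ surface if there are "enough" pieces and enough gluing curves, and bounding the number of pieces (there must be at least one "$+$" piece and at least one "$-$" piece, and innermost pieces must be disks or once-punctured disks, which forces specific small values of $g$ and the total puncture number $2n$). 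Carefully bookkeeping the contributions $\sum_i \chi(P_i) = 2 - 2g - 2n$ against the number of gluing curves and the constraint that every piece is planar with at most one puncture should force $2 - 2g - 2n \geq$ some explicit bound that only the pairs $(0,1), (0,2), (1,1)$ can meet. For instance $(0,1)$: $\Sigma_s - L$ is a twice-punctured sphere, itself a single annulus-with-punctures that is "mostly above and below" trivially; $(0,2)$: four-punctured sphere; $(1,1)$: twice-punctured torus — in each of these small cases one checks by hand that a disk/once-punctured-disk decomposition on both sides genuinely can coexist, while in all larger cases it cannot.

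\textbf{Main obstacle.} The delicate point is the combinatorial topology of the cut-and-paste argument: I must argue that an innermost curve of $c$ in $\Sigma_s$ bounds a disk or once-punctured disk on \emph{one} side (say it is in $\mathscr{R}_a$'s decomposition), but the \emph{same} curve must also be part of the $\mathscr{R}_b$ decomposition, hence bound a disk-or-once-punctured-disk piece on the other side too — so an innermost curve co-bounds such small pieces on both sides, and removing this sphere-or-small-piece and proceeding inductively should collapse $\Sigma_s - L$ down to nothing unless $g$ and $n$ are tiny. Making this induction clean — handling the cases where $c = \emptyset$ (then all of $\Sigma_s$ lies on one side, contradicting that it is simultaneously mostly above and mostly below unless $\Sigma_s - L$ is itself a disk with $\leq 1$ puncture, impossible for a bridge surface) and correctly tracking punctures through each reduction — is where the real work lies; everything else is a bounded Euler-characteristic bookkeeping exercise. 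I expect the bound to come out as $\chi(\Sigma_s - L) \geq -1$ or so, i.e. $2 - 2g - 2n \geq -1$, immediately leaving only $(g,n) \in \{(0,1),(0,2),(1,1)\}$ as the borderline cases, which are then excluded by hypothesis.
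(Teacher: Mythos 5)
Your proposal breaks down at its very first step, the claim that the defining conditions of $\mathscr{R}_{a}$ and $\mathscr{R}_{b}$ are ``closed conditions'' so that a point of $\Cl(\mathscr{R}_{a})\cap\Cl(\mathscr{R}_{b})$ can be replaced by a point where $\Sigma_s$ is genuinely mostly above and mostly below a transverse $\Sigma'_t$. This is false, and it is exactly the content of the lemma. The regions $\mathscr{R}_{a},\mathscr{R}_{b}$ are unions of complementary regions of the graphic; when $(s,t)$ tends to an edge or a vertex of the graphic, components of $\Sigma_s\cap {V'_t}^{\mp}$ can merge at a tangency of $g|_{\Sigma_s}$ or absorb a point of $L$, and two pieces each lying in a once-punctured disk can merge into a piece that lies in no such disk. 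So the closures may meet along an edge or at a vertex of the graphic even when $\mathscr{R}_{a}\cap\mathscr{R}_{b}=\emptyset$, and at such a point one cannot assume both ``mostly above'' and ``mostly below'' hold simultaneously for a transverse intersection. The paper therefore treats three separate situations: an interior intersection point (which forces an intersection curve bounding once-punctured disks on both sides, hence $\Sigma$ a twice-punctured sphere, i.e.\ $(0,1)$), a shared edge (analyzed via a pair-of-pants or once-punctured-annulus neighborhood of the singular leaf, again forcing $(0,1)$), and a shared vertex (two singular leaves between the levels $t_-$ and $t_+$, forcing a four-punctured sphere or once-punctured torus, i.e.\ $(0,2)$ or $(1,1)$). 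Note that $(0,2)$ and $(1,1)$ arise \emph{only} in the shared-vertex case; if your closedness claim were true those exceptional pairs would not appear at all, which is already a sign that the reduction cannot work.

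The proposed repair by perturbation does not save the argument: vertical convexity says $\Cl(\mathscr{R}_{a})$ is downward closed and $\Cl(\mathscr{R}_{b})$ is upward closed in $t$, so moving $t$ keeps you in at most one of the two closures; the intersection $\Cl(\mathscr{R}_{a})\cap\Cl(\mathscr{R}_{b})$ may consist of a single vertex of the graphic, with no nearby point lying in both regions. You would instead have to analyze what the level sets look like at a shared edge or vertex (singular leaves of $g|_{\Sigma_s}$, or leaves through points of $L$), which is the real work the paper does and which your outline omits. Separately, even in the honest interior case your Euler-characteristic bookkeeping is left as a hope (``should force $2-2g-2n\geq$ some explicit bound''): knowing every piece is planar with at most one puncture only gives $\chi(P_i)\le 1$, which by itself bounds nothing without controlling the number of pieces and gluing curves; the efficient route is the paper's observation that some intersection curve must bound disks with at most one puncture on \emph{both} sides, immediately giving the twice-punctured sphere. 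So the interior case is salvageable along your lines, but the boundary cases — the heart of the lemma — are missing.
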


\begin{proof}
We first suppose that $\mathscr{R}_{a} \cap \mathscr{R}_{b}\not=\emptyset$. 
Let $(s,t) \in \mathscr{R}_{a} \cap \mathscr{R}_{b}$. 
Then there exists a component $l$ of $\Sigma_{s} \cap \Sigma_{t}^{\prime}$ such that 
$l$ bounds once-punctured disks in $\Sigma_{s}$ in both sides of $l$. 
Thus $\Sigma_{s}$ is a twice-punctured sphere and $(\mathrm{genus}(\Sigma),n)=(0,1)$. 

Next, suppose that $\mathrm{Cl}(\mathscr{R}_{a})$ and $\mathrm{Cl}(\mathscr{R}_{b})$ share an edge of the graphic. 
Let $(s,t)$ be a point in the (interior of the) common edge of 
$\mathrm{Cl}(\mathscr{R}_{a})$ and $\mathrm{Cl}(\mathscr{R}_{b})$. 
A small neighborhood $P$ in $\Sigma_{s}$ of the component of $g|_{\Sigma_{s}}^{-1} (t)$ 
containing a critical point of $g|_{\Sigma_{s}}$ or a point of $L$  
is either a pair of pants or a once-punctured annulus, see Figure~\ref{fig:common_edge}. 
\begin{figure}[htbp]
\begin{center}
\includegraphics[width=12cm,clip]{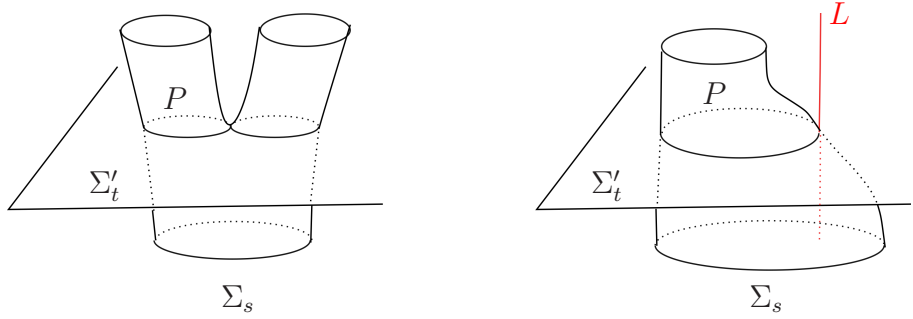}
\begin{picture}(400,0)(0,0)
\put(60,43){$\Sigma'_t$}
\put(110,00){$\Sigma_s$}
\put(250,43){$\Sigma'_t$}
\put(310,00){$\Sigma_s$}
\put(88,75){$P$}
\put(292,77){$P$}

\put(340,107){$\textcolor{red}{L}$}
\end{picture}
\caption{A small neighborhood $P$ in $\Sigma_{s}$ of the component of $g|_{\Sigma_{s}}^{-1} (t)$ 
containing a critical point of $g|_{\Sigma_{s}}$ or a point of $L$.}
\label{fig:common_edge}
\end{center}
\end{figure}
By the assumption, 
each component of $\partial P$ is inessential in 
$\Sigma_{s} - L$. 
Therefore $\Sigma_{s}$ must be a twice-punctured sphere, and thus, 
we have $(\mathrm{genus}(\Sigma),n)=(0,1)$. 

Finally, suppose that $\mathrm{Cl}(\mathscr{R}_{a})$ and $\mathrm{Cl}(\mathscr{R}_{b})$ do not share 
any edge, but they share a vertex of the graphic. 
Let $(s,t_{\pm})$ be points near the vertex shown in Figure~\ref{fig:intersection_of_regions}. 
\begin{figure}[htbp]
\begin{center}
\includegraphics[width=3cm,clip]{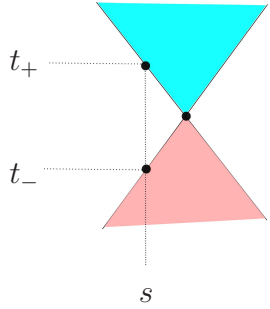}
\begin{picture}(400,0)(0,0)
\put(194,0){$s$}
\put(145,45){$t_{-}$}
\put(145,88){$t_{+}$}
\end{picture}
\caption{A neighborhood of the vertex}
\label{fig:intersection_of_regions}
\end{center}
\end{figure} 
There are exactly two critical points of $g|_{\Sigma_{s}}$ 
between $g|_{\Sigma_{s}}^{-1} (t_{-})$ and $g|_{\Sigma_{s}}^{-1}(t_{+})$: 
one is on $g|_{\Sigma_{s}}^{-1} (t_{-})$ and 
the other is on $g|_{\Sigma_{s}}^{-1} (t_{+})$. 
As in the above case, a small neighborhood $Q_\pm$ in 
$\Sigma_{s}$ of the component of each $g|_{\Sigma_{s}}^{-1}(t_{\pm})$ of concern 
is a pair of pants or a once-punctured annulus. 
In the surface $\Sigma_{s}  - L$, 
each component of $\partial Q_\pm$ either bounds a once-punctured disk or 
cobounds an annulus 
with another component of $\partial Q_\pm$. 
Thus, we can check that $\Sigma_{s}$ is 
either a four-times punctured sphere or a once-punctured torus, which implies
$(\mathrm{genus}(\Sigma),n)=(0,2),(1,1)$. 
\end{proof}

In what follows, we assume that $(\mathrm{genus}(\Sigma),n)\not=(0,1),(0,2),(1,1)$. 
We say that {\it $g$ spans $f$} if 
there there exists $t \in [-1,1]$ such that the horizontal arc $[-1,1] \times \{ t \}$ intersects both 
$\mathscr{R}_{a}$ and $\mathscr{R}_{b}$. 
Otherwise,  we say that {\it $g$ splits $f$}. 
See Figure~\ref{fig:spannig_splitting}. 

\begin{figure}[htbp]
\begin{center}
\includegraphics[width=7cm,clip]{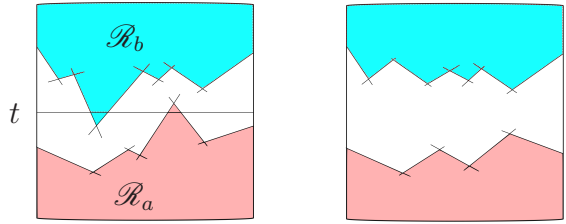}
\begin{picture}(400,0)(0,0)
\put(90,51){$t$}
\put(127,80){$\mathscr{R}_{b}$}
\put(130,20){$\mathscr{R}_{a}$}
\end{picture}
\caption{
The function $g$ spans $f$ 
if some horizontal arc in the square intersects both $\mathscr{R}_{a}$ and $\mathscr{R}_{b}$ (left), 
and otherwise $g$ splits $f$ (right).}
\label{fig:spannig_splitting}
\end{center}
\end{figure}

We say that {\it $g$ spans $f$ positively} 
if there exist points $(a,t) \in \mathscr{R}_{a}$ and $(b,t) \in \mathscr{R}_{b}$ 
with $b<a$.

\begin{lemma}[{\cite[Lemma 14]{Joh10_JTP}}]\label{lem:positively_spanning}
Let $f$ be a sweep-out of $(M, L)$, and $g$ the result of perturbing $f$ slightly  
so that the graphic defined by $f \times g$ is generic. 
Then $g$ spans $f$ positively.   
\end{lemma}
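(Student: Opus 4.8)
The plan is to follow Johnson's original argument for Heegaard splittings, adapting the disk/puncture bookkeeping to the bridge setting. Let $f$ be a sweep-out of $(M,L)$ associated with $(M,L;\Sigma)$, and let $g = f\circ\Phi_1$ where $\{\Phi_r\}_{r\in[0,1]}$ is a small ambient isotopy with $\Phi_0 = \id_M$, chosen so that $g$ is a perturbation of $f$ making the graphic $\Gamma$ defined by $f\times g$ generic (possible by Lemma~\ref{lemm:generic}). The key geometric input is that, because $g$ is obtained by an arbitrarily small perturbation of $f$, a level surface $\Sigma_s = f^{-1}(s)$ and the nearby level surface $\Sigma'_s = g^{-1}(s)$ are $C^\infty$-close and intersect in a collection of simple closed curves and arcs that are null-homotopic (bounding disks, annuli, or once-punctured disks) in $\Sigma_s - L$, since they are the intersection locus of two copies of the same surface pushed off each other. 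First I would establish this local picture precisely: for $s$ near a given value, the diffeomorphism $\Phi_1^{-1}$ carries $\Sigma'_s$ to $\Sigma_s$, and the intersection $\Sigma_s\cap\Sigma'_s$ sits inside a thin product region $\Sigma_s\times(-\epsilon,\epsilon)$, so every component of $\Sigma_s\cap {V'_s}^{-}$ and of $\Sigma_s\cap {V'_s}^{+}$ lies in a disk with at most one puncture in $\Sigma_s - L$.

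The second step is to translate this into a statement about the diagonal of the square. The previous observation shows that for every $s\in(-1,1)$ the diagonal point $(s,s)$ lies in $\mathrm{Cl}(\mathscr{R}_a)\cap\mathrm{Cl}(\mathscr{R}_b)$; more carefully, by perturbing slightly one gets points $(s, s-\delta)\in\mathscr{R}_a$ and $(s,s+\delta)\in\mathscr{R}_b$ for small $\delta>0$, or one argues directly that a horizontal arc just below the diagonal meets $\mathscr{R}_a$ near the diagonal and a horizontal arc just above meets $\mathscr{R}_b$. Using vertical convexity of $\mathrm{Cl}(\mathscr{R}_a)$ and $\mathrm{Cl}(\mathscr{R}_b)$ (established in the text) together with Lemma~\ref{lem:intersection_of_regions} — which guarantees the closures are \emph{disjoint} under our hypothesis on $(\mathrm{genus}(\Sigma),n)$ — I would then conclude that for a fixed $s$ near the center, the point $(s,t)$ lies in $\mathscr{R}_a$ for $t$ slightly less than $s$ and in $\mathscr{R}_b$ for $t$ slightly greater than $s$, so the horizontal arc at such $t$-values meets the respective regions. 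Combining the $\mathscr{R}_a$-point and the $\mathscr{R}_b$-point at a common $t$ requires a short argument sliding along the diagonal, but the upshot is that there is a horizontal arc $[-1,1]\times\{t\}$ meeting both $\mathscr{R}_a$ and $\mathscr{R}_b$, i.e. $g$ spans $f$.

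The final step is to upgrade ``spans'' to ``spans positively,'' i.e. to produce $(a,t)\in\mathscr{R}_a$ and $(b,t)\in\mathscr{R}_b$ with $b<a$. Here the orientation of the perturbation matters: the ambient isotopy $\Phi_r$ pushes the level surface $\Sigma_s$ slightly to one side, so near the diagonal the region containing points with $t$ slightly below $s$ (which is $\mathscr{R}_a$, by the boundary labeling near $[-1,1]\times\{-1\}$) sits at larger $s$-coordinate than the region with $t$ slightly above $s$. Concretely, the pushoff direction can be taken so that for the common $t$ produced above, the $\mathscr{R}_a$ point has first coordinate $a$ larger than the $\mathscr{R}_b$ point's first coordinate $b$; if the chosen pushoff gives the opposite inequality, one reverses the direction of $\Phi_1$ (replacing $\Phi_1$ by a pushoff to the other side), which swaps the roles and yields $b<a$. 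I expect the main obstacle to be making the diagonal/pushoff analysis rigorous: one must verify that the perturbation is small enough that the product region picture holds uniformly in $s$ away from the spines, and that near the spines (where $f\times g$ need not be stable) the labeling of points near $[-1,1]\times\{\pm1\}$ as $\mathscr{R}_a$ resp. $\mathscr{R}_b$ still forces the positivity of the spanning. This is exactly the content of Johnson's proof of \cite[Lemma 14]{Joh10_JTP}, and the adaptation here only requires checking that the once-punctured disk allowances in the definitions of $\mathscr{R}_a$, $\mathscr{R}_b$ do not disrupt the argument, which they do not once Lemma~\ref{lem:intersection_of_regions} rules out the small exceptional cases.
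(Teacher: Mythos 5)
The paper itself does not prove this lemma --- it is quoted directly from Johnson \cite[Lemma 14]{Joh10_JTP} --- so your proposal is measured against the standard (and very short) argument. That argument is: fix any $t\in(-1,1)$ and let $\epsilon$ bound the $C^0$-size of the perturbation, so that $|f-g|<\epsilon$. If $a>t+\epsilon$ then every $x\in\Sigma_a$ has $g(x)>t$, so $\Sigma_a\cap {V'_t}^{-}=\emptyset$ and the defining condition of ``mostly above'' holds vacuously, i.e.\ $(a,t)\in\mathscr{R}_a$; symmetrically $(b,t)\in\mathscr{R}_b$ for $b<t-\epsilon$. Since $b<t<a$, $g$ spans $f$ positively, with no case analysis and no choice to be made. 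Your proposal contains the germ of this (the off-diagonal observations), but two of its steps are genuinely wrong. First, your opening claim --- that at the diagonal every component of $\Sigma_s\cap{V'_s}^{\pm}$ lies in a disk with at most one puncture, so that $(s,s)\in\mathrm{Cl}(\mathscr{R}_a)\cap\mathrm{Cl}(\mathscr{R}_b)$ --- is false in general and directly contradicts Lemma \ref{lem:intersection_of_regions}, which you invoke two sentences later. The intersection of $\Sigma_s$ with a nearby pushoff $\Sigma'_s$ is the zero set of the small function $(g-s)|_{\Sigma_s}$, and its components, as well as the subsurfaces where $g-s$ has a fixed sign, can perfectly well be essential in $\Sigma_s-L$; ``small perturbation'' controls the vertical size of the wiggling, not its horizontal position in $\Sigma_s$. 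What is true is only the statement at vertical distance more than $\epsilon$ from the diagonal, where the relevant intersection is empty.

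Second, your final step makes positivity depend on the direction of the pushoff, to be repaired ``by reversing $\Phi_1$'' if the inequality comes out the wrong way. This is both illegitimate and unnecessary. Illegitimate because the lemma asserts positive spanning for the \emph{given} $g$, and this is how it is used later (in the proof of Lemma \ref{lem:split}, $g_0=g$ is the fixed sweep-out with which the isotopy starts); you are not free to replace $g$ by a different perturbation. Unnecessary because, as the computation above shows, the $\mathscr{R}_a$-point on the horizontal line $\{t\}$ automatically sits to the right of the $\mathscr{R}_b$-point: which side of the diagonal carries which label is forced by the definitions of ${V'_t}^{\pm}$ and of ``mostly above/below,'' not by the sign of the perturbation. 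Relatedly, the ``sliding along the diagonal'' you anticipate to put the two points on a common horizontal arc is not needed: fixing $t$ and taking $s=t\pm\delta$ with $\delta>\epsilon$ already does this. So the approach should be replaced by the direct vacuous-intersection argument rather than patched.
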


\section{Upper bounds for the distance}\label{sec:upper_bound_for_distance}

Let $(M,L)=(V^+, V^+ \cap L) \cup_{\Sigma} (V^-,V^- \cap L)$ 
be a $(\mathrm{genus}(\Sigma),n)$-decomposition of a link 
$L$ in a closed orientable $3$-manifold $M$, and 
suppose that $(\mathrm{genus}(\Sigma),n)\not=(0,1), (0,2),(1,1)$.  
Recall that  
$d_{\mathcal{PD}}(M,L;\Sigma)=d_{\mathrm{C}(\Sigma_{L})}(\mathcal{PD}(V^+_L),\mathcal{PD}(V^-_L))$.  
The goal of this section is to show the following.  

\begin{theorem}\label{thm:injection}
If $d_{\mathcal{PD}}(M,L;\Sigma) \geq 4$, then
the natural homomorphism $\eta:\mathcal{G}(M,L;\Sigma) \rightarrow \mathrm{MCG}(M,L)$ is injective.  
\end{theorem}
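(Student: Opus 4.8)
<br>

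The plan is to mimic Johnson's argument from \cite{Joh10_PAMS}, adapted to the bridge-surface setting using the sweep-out/graphic machinery recalled in Section~\ref{sec:sweep-outs}. Suppose $[\varphi] \in \mathcal{G}(M,L;\Sigma)$ lies in the kernel of $\eta$, so $\varphi$ is isotopic to the identity of $M$ through diffeomorphisms preserving $L$. Fix a sweep-out $f$ of $(M,L)$ associated with $(M,L;\Sigma)$, and let $\{\Phi_r\}_{r\in[0,1]}$ be an ambient isotopy with $\Phi_0 = \id_M$, $\Phi_1 = \varphi$, and $\Phi_r(L) = L$ for all $r$; this exists precisely because $[\varphi] \in \ker\eta$. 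Set $g_r := f \circ \Phi_r$, so $g_0 = f$ and $g_1 = f \circ \varphi$. Since $\varphi$ preserves the bridge decomposition, $g_1$ is again a sweep-out of $(M,L)$ associated with $(M,L;\Sigma)$, and the goal is to show that $\varphi$ must in fact be isotopic to the identity \emph{through bridge surfaces of $L$}, i.e. that $[\varphi]$ is trivial in $\mathcal{G}(M,L;\Sigma)$ itself — giving injectivity.

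The mechanism for this is a ``spanning vs.\ splitting'' dichotomy applied to the family $f \times g_r$. By Lemma~\ref{lemm:generic} we may perturb $\{\Phi_r\}$ so that the graphic defined by $f \times g_r$ is generic for all but finitely many $r$, with only the two listed types of non-generic behaviour in between. By Lemma~\ref{lem:positively_spanning}, at $r = 0$ the sweep-out $g_0$ spans $f$ positively, and by symmetry (applying the same lemma to $g_1$, which is a perturbation-equivalent sweep-out associated with the same decomposition) $g_1$ spans $f$ as well — indeed one checks $g_1$ spans $f$ positively too. The heart of the argument is then: if $g_r$ splits $f$ for some $r$, then the splitting of the square $[-1,1]\times[-1,1]$ into $\Cl(\mathscr{R}_a)$ (below) and $\Cl(\mathscr{R}_b)$ (above), which are disjoint by Lemma~\ref{lem:intersection_of_regions} and vertically convex, forces a point $(s,t)$ on their common frontier; the level surface $\Sigma_s = f^{-1}(s)$ is then neither mostly above nor mostly below $\Sigma'_t = g_r^{-1}(t)$, and a standard surgery/innermost-disk analysis of $\Sigma_s \cap \Sigma'_t$ in $\Sigma_s - L$ produces an essential curve or once-punctured disk in $\Sigma_s - L$ that bounds a disk or once-punctured disk on one side in $V^+ - L$ and on the other side in $V^- - L$ (after pushing $\Sigma_s$, $\Sigma'_t$ to opposite sides), witnessing $d_{\mathcal{PD}}(M,L;\Sigma) \le 3$ — contradicting the hypothesis $d_{\mathcal{PD}}(M,L;\Sigma) \ge 4$. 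Hence $g_r$ spans $f$ for \emph{every} $r \in [0,1]$.

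From ``$g_r$ spans $f$ for all $r$'' one extracts the conclusion. Tracking the label regions as $r$ varies from $0$ to $1$, the function $r \mapsto$ (the sign with which $g_r$ spans $f$, when it does so unambiguously) can only change at the finitely many non-generic parameters, and one shows using the local moves of Lemma~\ref{lemm:generic} together with the fact that spanning persists that the sign is locally constant — so $g_1$ spans $f$ positively just as $g_0$ does. Then one invokes the standard consequence: if $g = f \circ \varphi$ spans $f$ positively (equivalently, if the ``identity'' sweep-out and its image under $\varphi$ are compatibly spanning), there is a value $t$ and a level $\Sigma_s$ of $f$ that is simultaneously mostly above and mostly below a level of $g$, which, combined again with the distance hypothesis ruling out the degenerate possibilities, pins down an isotopy (supported near the bridge surface) from $\Sigma$ to $\varphi(\Sigma)$ through bridge surfaces of $L$; thus $[\varphi]$ is trivial in $\mathcal{G}(M,L;\Sigma)$.

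\begin{remark}
The main obstacle is the splitting case: the surgery argument on $\Sigma_s \cap \Sigma'_t$ must be carried out carefully in the \emph{punctured} surface $\Sigma_s - L$, keeping track of once-punctured disks as well as honest disks, and one must verify that the curves produced genuinely realize a distance-$\le 3$ path in $\mathcal{C}(\Sigma_L)$ between $\mathcal{PD}(V^+_L)$ and $\mathcal{PD}(V^-_L)$ — this is where the bound ``$4$'' (rather than Johnson's ``$4$'' for Heegaard splittings with a different book-keeping) comes from, and where Lemma~\ref{lem:intersection_of_regions}, excluding the small cases $(g,n) = (0,1),(0,2),(1,1)$, is essential to guarantee that the frontier curves are in fact essential.
\end{remark}
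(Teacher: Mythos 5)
Your overall skeleton is the same as the paper's (kernel element gives an ambient isotopy $\{\Phi_r\}$, pass to a family of sweep-outs, make the graphics generic, and run the span/split dichotomy: splitting forces $d_{\mathcal{PD}}\le 3$, so spanning holds for all $r$ and one concludes triviality), but both places where the actual work happens are either mis-stated or would fail as you describe them. First, the splitting case. You locate the contradiction at ``a point $(s,t)$ on the common frontier of $\Cl(\mathscr{R}_a)$ and $\Cl(\mathscr{R}_b)$,'' but by Lemma~\ref{lem:intersection_of_regions} these closures are \emph{disjoint} (that is precisely why the small cases $(0,1),(0,2),(1,1)$ are excluded), so no such point exists. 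The paper instead works at the transitional parameter $r_0$ at which positive spanning degenerates, where the projections of the two regions to the $t$-axis meet in a single value $t$ and the horizontal line at $t$ hits the two regions only in two graphic vertices $(b,t)$ and $(a,t)$. Moreover, your claimed witness --- one curve bounding a disk or once-punctured disk into $V^+-L$ on one side and into $V^--L$ on the other --- would give $d_{\mathcal{PD}}=0$, which is not what this situation produces; the actual estimate is a length-$3$ path: essential level loops $l_a\subset f^{-1}(a_-)\cap\Sigma'_t$ and $l_b\subset f^{-1}(b_+)\cap\Sigma'_t$ are disjoint (middle edge), and a substantial case analysis (Cases A1--A4 and B1--B5 in the proof of Lemma~\ref{lem:split}, which also requires the standing assumption, justified by Lemma~\ref{lem:Haken}, that no meridian of $L$ bounds a disk in $M-L$) shows $l_a$ is distance $\le 1$ from $\mathcal{PD}(V^+_L)$ and $l_b$ is distance $\le 1$ from $\mathcal{PD}(V^-_L)$. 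That case analysis is the heart of the theorem and cannot be dismissed as a ``standard surgery/innermost-disk analysis'' without argument.

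Second, the spanning case. A level surface cannot be ``simultaneously mostly above and mostly below'' a level of $g$ (again Lemma~\ref{lem:intersection_of_regions}), and, more importantly, producing an isotopy from $\Sigma$ to $\varphi(\Sigma)$ through bridge surfaces proves nothing: $\varphi(\Sigma)=\Sigma$ already for every element of the Goeritz group, so such an isotopy always exists and does not make the class trivial. What must be shown is that the \emph{map} $\varphi|_\Sigma$ is isotopic to $\id_\Sigma$ rel $\Sigma\cap L$, after which triviality follows from the injection $\mathcal{G}(M,L;\Sigma)\hookrightarrow\mathrm{MCG}(\Sigma,\Sigma\cap L)$. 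The paper's Lemma~\ref{lem:span} does this by choosing, for each $r$, a level $t(r)$ in the overlap of the projections, compressing $\Sigma'_{t(r)}$ between a mostly-above and a mostly-below level of $f$ to a surface $S_r$, showing via the Euler characteristic rigidity of Lemmas~\ref{lem:Euler characteristic after a compression} and \ref{lem:level_surface} that $S_r$ is a level surface canonically identified with $\Sigma'_{t(r)}$ away from at-most-once-punctured disks, and then tracking the resulting homeomorphisms $\varphi_r\colon g^{-1}(0)\to f^{-1}(0)$ continuously from $\varphi_0=\id$ to $\varphi_1=\varphi|_\Sigma$; your proposal contains no substitute for this continuity argument (and the positivity/``locally constant sign'' discussion you lean on is not what is needed here --- positivity enters only in setting up $r_0$ in the splitting lemma). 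A minor further point: taking $g_r=f\circ\Phi_r$ makes $f\times g_0=f\times f$ unstable; one should first perturb $f$ to $g$ and set $g_r=g\circ\Phi_r$, as in the paper.
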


We prove Theorem~\ref{thm:injection}. 
We first note that, by Lemma \ref{lem:Haken}, we may assume the following. 

\vspace{0.5em}
\noindent
\textbf{Assumption:}~Any meridional loop of $L$ does not bound a disk in $M-L$. 

\begin{lemma}
\label{lem:Euler characteristic after a compression}
%Let $L$ be a link in a closed orientable $3$-manifold $M$. 
Let $L$ and $M$ be as above.  
Let $\Sigma$ be a closed connected surface in $M$ intersecting $L$ transversely. 
Let $D$ be a disk in $M$ such that $D \cap \Sigma = \partial D$, 
$\partial D \cap L = \emptyset$,  and 
$D$ intersects $L$ transversely in at most one point. 
Let $\Sigma'$ be a component of a surface obtained by compressing $\Sigma$ along $D$. 
Then we have $\chi (\Sigma' - L) \geq \chi (\Sigma - L)$,  
where $\chi(\cdot)$ denotes the Euler characteristic. 
\end{lemma}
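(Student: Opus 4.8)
The plan is to compute directly how both the Euler characteristic and the number of intersection points with $L$ change under the compression, and then to invoke the Assumption to rule out the single bad case.

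First I would record two elementary facts. For any compact surface $S$ transverse to $L$ one has $\chi(S - L) = \chi(S) - |S \cap L|$; in particular $\chi(\Sigma - L) = \chi(\Sigma) - m$ with $m := |\Sigma \cap L|$. Moreover, compressing $\Sigma$ along $D$ replaces an annular neighbourhood of $\partial D$ in $\Sigma$ by two parallel copies $D_{1}, D_{2}$ of $D$, so the total Euler characteristic of the resulting (possibly disconnected) surface $\widehat\Sigma$ is $\chi(\Sigma) + 2$. Since $\partial D \cap L = \emptyset$, the removed annulus is disjoint from $L$; hence $|\widehat\Sigma \cap L| = m$ if $D \cap L = \emptyset$, while $|\widehat\Sigma \cap L| = m + 2$ if $D$ meets $L$ in one point, each of $D_{1}, D_{2}$ then meeting $L$ in one point near $D \cap L$ (which holds once the push-offs are taken close enough to $D$).

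Next I would split into cases according to whether $\partial D$ is non-separating or separating in $\Sigma$ (the latter case including the possibility that $\partial D$ bounds a disk in $\Sigma$). If $\partial D$ is non-separating, then $\widehat\Sigma = \Sigma'$ is connected and the two facts above give $\chi(\Sigma' - L) = \chi(\Sigma - L) + 2$ when $D \cap L = \emptyset$ and $\chi(\Sigma' - L) = \chi(\Sigma - L)$ when $|D \cap L| = 1$; either way the asserted inequality holds. If $\partial D$ is separating, write $\widehat\Sigma = \Sigma'_{1} \sqcup \Sigma'_{2}$; the $m$ points of $\Sigma \cap L$ are distributed as $m_{1} + m_{2} = m$ over the two components, and when $|D \cap L| = 1$ the copies $D_{1}, D_{2}$ lie on different components. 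Summing over the two pieces, $\chi(\Sigma'_{1} - L) + \chi(\Sigma'_{2} - L)$ equals $\chi(\Sigma - L) + 2$ when $D \cap L = \emptyset$ and $\chi(\Sigma - L)$ when $|D \cap L| = 1$. Thus it suffices to bound each $\chi(\Sigma'_{j} - L)$ from above: since $\Sigma'_{j}$ is a closed surface one always has $\chi(\Sigma'_{j} - L) \le 2$, which settles the case $D \cap L = \emptyset$, and it remains to show $\chi(\Sigma'_{j} - L) \le 0$ when $|D \cap L| = 1$.

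This last bound is the step that genuinely uses the hypotheses, and I expect it to be the main point. When $|D \cap L| = 1$ we have $|\Sigma'_{j} \cap L| = m_{j} + 1$, so $\chi(\Sigma'_{j} - L) = 1 - 2\,\mathrm{genus}(\Sigma'_{j}) - m_{j}$; if this were positive we would be forced to have $\mathrm{genus}(\Sigma'_{j}) = 0$ and $m_{j} = 0$, i.e.\ $\Sigma'_{j}$ would be a $2$-sphere meeting $L$ transversely in the single point coming from $D_{j}$. Removing a small open disk neighbourhood of that point from $\Sigma'_{j}$ then exhibits a meridian of $L$ bounding a disk in $M - L$, contradicting the Assumption (and, via Lemma~\ref{lem:Haken}, incompatible with $d_{\mathcal{PD}}(M,L;\Sigma) \geq 4$). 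Hence $\chi(\Sigma'_{j} - L) \le 0$ for each $j$, which completes the argument.
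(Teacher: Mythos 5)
Your proof is correct and follows essentially the same route as the paper: both arguments reduce to the observation that the inequality can only fail when $|D \cap L| = 1$ and $\partial D$ cuts off an unpunctured disk in $\Sigma$, so that the compression produces a sphere meeting $L$ transversely in one point, which is exactly what the standing Assumption (equivalently, Lemma~\ref{lem:Haken} with $d_{\mathcal{PD}} \geq 4$) forbids. You simply make explicit the Euler characteristic bookkeeping that the paper's two-line proof leaves implicit.
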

\begin{remark}
In the above lemma, we allow the case where $D$ is not a compression disk for 
$\Sigma$, in other words, 
$\partial D$ can be inessential in $\Sigma$.  
\end{remark}
\begin{proof}
Suppose that $\chi (\Sigma' - L) < \chi (\Sigma - L)$. 
Then the only possibility is that $|D \cap L| = 1$, 
$\partial D$ bounds a disk $E$ in $\Sigma$ with $E \cap L = \emptyset$, and 
$\Sigma' = (\Sigma - E) \cup D$. 
This contradicts our assumption stated right before the lemma. 
\end{proof}

\begin{lemma}\label{lem:level_surface}
Let $\Sigma$ be a closed orientable surface, 
$K$ the union of vertical arcs in $\Sigma \times [0,1]$, and 
$S$ a surface in $\Sigma \times [0,1]$ that intersects $K$ transversely. 
If $S$ separates $\Sigma \times \{0\}$ from $\Sigma \times \{1\}$, 
then $\chi(S_{K}) \le \chi(\Sigma_{K})$. 
Furthermore, the equality holds if and only if 
$S$ is isotopic to a horizontal surface keeping $S$ transverse to $K$ throughout the isotopy. 
\end{lemma}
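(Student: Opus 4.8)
The plan is to study the restriction to $S$ of the projection $p\colon\Sigma\times[0,1]\to\Sigma$. Since $S$ separates $\Sigma\times\{0\}$ from $\Sigma\times\{1\}$, every vertical arc $\{x\}\times[0,1]$ meets $S$, and its mod-$2$ intersection number with $S$ equals $1$; hence for a regular value $x\in\Sigma$ of $p|_S$ the fiber $(p|_S)^{-1}(x)=S\cap(\{x\}\times[0,1])$ has odd cardinality, so $p|_S\colon S\to\Sigma$ has odd, in particular nonzero, degree. A nonzero-degree map of closed connected orientable surfaces induces, via Poincar\'e duality, an injection $H^{1}(\Sigma;\mathbb{Q})\hookrightarrow H^{1}(S;\mathbb{Q})$, so $b_{1}(\Sigma)\le b_{1}(S)$ and therefore $\chi(S)=2-b_{1}(S)\le 2-b_{1}(\Sigma)=\chi(\Sigma)$.

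Put $m:=|K|$. Each component $K_{i}$ of $K$ meets $S$ transversely in an odd number $1+2k_{i}$ of points with $k_{i}\ge 0$, so $|S\cap K|=m+2\sum_{i}k_{i}$; since also $\chi(\Sigma_{K})=\chi(\Sigma)-m$, we obtain
\[
\chi(S_{K})=\chi(S)-m-2\sum_{i}k_{i}\ \le\ \chi(\Sigma)-m-2\sum_{i}k_{i}\ \le\ \chi(\Sigma)-m=\chi(\Sigma_{K}),
\]
which is the asserted inequality, with equality precisely when $\chi(S)=\chi(\Sigma)$ and every $k_{i}=0$, i.e.\ when each component of $K$ meets $S$ in exactly one point. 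One implication of the last assertion is then immediate: if $S$ is isotopic to a horizontal surface through surfaces transverse to $K$, then $\chi(S)$ is constant along the isotopy and, transversality being preserved, $|S\cap K|$ cannot change, so $\chi(S_{K})$ is constant, equal to its horizontal value $\chi(\Sigma)-m=\chi(\Sigma_{K})$.

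For the converse, suppose $\chi(S_{K})=\chi(\Sigma_{K})$; by the inequality just proved, $\chi(S)=\chi(\Sigma)$ is the largest Euler characteristic of a connected surface in $\Sigma\times[0,1]$ separating $\Sigma\times\{0\}$ from $\Sigma\times\{1\}$. The plan is to use this maximality to show that $S$ is incompressible and boundary-incompressible relative to $K$: cutting $\Sigma\times[0,1]$ open along a tubular neighborhood of $K$ presents $S$ as a properly embedded surface $S'$ in $F\times[0,1]$, where $F$ is $\Sigma$ with $|K|$ open disks removed, with $\partial S'\subset\partial F\times[0,1]$ a union of meridians of $K$ and $\chi(S')=\chi(S_{K})=\chi(F)=\chi(F\times\{0\})$; since a compression or a $\partial$-compression strictly increases the Euler characteristic while preserving the class separating the two ends, and since the effect of a compression is controlled as in Lemma~\ref{lem:Euler characteristic after a compression} (whose hypothesis holds here, no meridian of $K$ bounding a disk in $(\Sigma\times[0,1])-K$), one shows that no such move is available without contradicting maximality. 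Then the standard classification of incompressible, $\partial$-incompressible surfaces in a product forces $S'$ to be horizontal — it cannot be vertical, its boundary lying in $\partial F\times[0,1]$ rather than in $F\times\{0,1\}$ — so that $S$ is isotopic to a horizontal surface by an isotopy supported away from $K$, which in particular keeps $S$ transverse to $K$ throughout. The main obstacle is making this incompressibility step precise: a compressing disk for $S$ cannot in general be pushed off $K$, so one is forced to argue in the cut-open manifold $F\times[0,1]$ and to keep careful track of the boundary pattern and of the sphere and disk components that (boundary-)compressions can produce — which is exactly the kind of bookkeeping that Lemma~\ref{lem:Euler characteristic after a compression} is designed to handle.
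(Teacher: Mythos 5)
Your proof of the inequality is correct, under the reading (implicit in the statement, and equally implicit in the paper's proof) that $S$ is a closed, connected, orientable surface; connectedness is genuinely needed here, since otherwise a small sphere component disjoint from $K$ would raise $\chi(S_K)$ above $\chi(\Sigma_K)$, and your degree and ``two complementary components'' arguments use it. Your route is genuinely different from the paper's: the paper compresses $S_K$ repeatedly until it is incompressible in $(\Sigma\times[0,1])-K$, observes via Lemma~\ref{lem:Euler characteristic after a compression} that $\chi$ does not decrease and that the compressed surface still separates the two ends, and then quotes the fact that incompressible surfaces in $\Sigma_K\times[0,1]$ are horizontal; you instead note that the projection $S\to\Sigma$ has odd, hence nonzero, degree, use the transfer to get $b_1(\Sigma)\le b_1(S)$, i.e.\ $\chi(S)\le\chi(\Sigma)$, and then count punctures, each strand of $K$ meeting $S$ an odd number of times. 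Your argument is more elementary for the inequality (no $3$-manifold topology at all) and it isolates the equality criterion explicitly ($\chi(S)=\chi(\Sigma)$ and exactly one intersection with each strand), whereas the paper's compression argument points more directly at the geometric conclusion, since the compressed surface is already horizontal. Concerning the ``furthermore'' clause: your direction ``horizontal $\Rightarrow$ equality'' is complete, but the converse is only a plan — you reduce to incompressibility and $\partial$-incompressibility of the cut-open surface in $F\times[0,1]$ and then invoke the product classification, and you yourself flag that the incompressibility step is not carried out (indeed, a compression that merely splits off a sphere increases total $\chi$ without contradicting maximality, so the argument needs exactly the bookkeeping you defer to Lemma~\ref{lem:Euler characteristic after a compression}). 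This is a real incompleteness in your write-up, but it does not put you behind the source: the paper's written proof establishes only the inequality and leaves the equality clause resting on the same standard facts about essential surfaces in a product that your sketch appeals to.
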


\begin{proof}
Let $S^{\prime}$ be the result of repeatedly compressing $S_{K}$ 
so that $S^{\prime}$ is incompressible in $(\Sigma \times [0,1]) - K$. 
The surface $S^{\prime}$ still separates $\Sigma \times \{0\}$ from $\Sigma \times \{1\}$, 
and it follows from Lemma~\ref{lem:Euler characteristic after a compression} that $\chi(S_{K}) \le \chi(S^{\prime})$. 
Since any incompressible surface in $\Sigma_{K} \times [0,1]$ is isotopic to a horizontal surface, 
we have $\chi(S_{K}) \le \chi(\Sigma_{K})$. 
\end{proof}

Let $f:M \rightarrow [-1,1]$ be a sweep-out of $(M, L)$ with $f^{-1}(0)=\Sigma$, and 
$g$ the result of perturbing  $f$ slightly. 
Let $[\phi]$ be in the kernel of  $\eta$. 
Then, there exists an ambient isotopy 
$\{\Phi_{r}:M \rightarrow M\}_{r \in [0,1]}$ such that  
$\Phi_{0}=\mathrm{id}_{M}$, 
$\Phi_{1}=\phi$, 
and $\Phi_{r}(L)=L$ for all $r \in [0,1]$. 
We can assume that $\{\Phi_{r}\}_{r \in [0,1]}$ satisfies 
the conditions described in Lemma~\ref{lemm:generic}, that is, 
only a finitely many element in the $1$-parameter family 
$\{ g_{r}:=g \circ \Phi_{r}\}_{r \in [0,1]}$ of sweep-outs of $(M,L)$
is non-generic.
%In the following, let $f:M \rightarrow [-1,1]$ be a sweep-out such that 
%$f^{-1}(0)=\Sigma$, and 
%$g$ the result of perturbing  $f$ slightly. 
%We consider the $1$-parameter family 
%$\{ g_{r}:=g \circ \Phi_{r}\}_{r \in [0,1]}$ of sweep-outs of $(M,L)$ 
%that satisfies the conditions described in Lemma \ref{lemm:generic}.} 
\begin{lemma}\label{lem:span}
If $g_{r}$ spans $f$ for all $r \in [0,1]$, 
then $\phi|_{\Sigma}$ is isotopic in $\Sigma$ to the identity $\mathrm{id}|_{\Sigma}$ 
relative to the points $\Sigma \cap L$.  
\end{lemma}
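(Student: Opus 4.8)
The plan is to use the continuity of the family $\{g_r\}_{r \in [0,1]}$ together with the rigidity coming from high distance. Since $g_r$ spans $f$ for every $r$, Lemma \ref{lem:positively_spanning} (applied to $g_0 = g$, a small perturbation of $f$) tells us that $g_0$ spans $f$ positively; I would first argue that this positivity is preserved for all $r$. Indeed, whenever the graphic defined by $f \times g_r$ is generic, the regions $\mathrm{Cl}(\mathscr{R}_a)$ and $\mathrm{Cl}(\mathscr{R}_b)$ are disjoint by Lemma \ref{lem:intersection_of_regions}, so on any horizontal arc meeting both regions there is a well-defined ``first'' $\mathscr{R}_a$-point and ``first'' $\mathscr{R}_b$-point, and the cyclic order of these along the arc (which is what ``positively'' records) can only change at a non-generic $r$. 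Using the explicit list of non-generic events in Lemma \ref{lemm:generic} — a single vertical/horizontal arc containing two vertices, or one of the six local moves of \cite{Joh11} — one checks case by case that none of them can flip a positively spanning configuration to a negatively spanning one without passing through a splitting configuration, which is excluded by hypothesis. Hence $g_r$ spans $f$ positively for all $r$, and in particular $g_1 = g \circ \phi$ spans $f$ positively.

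Next I would extract a disk-separation statement from spanning. Fix $r$ (generic, WLOG $r=1$) and a horizontal level $t$ with points $(a,t) \in \mathscr{R}_a$ and $(b,t) \in \mathscr{R}_b$. The surface $\Sigma_a = f^{-1}(a)$ is mostly above $\Sigma'_t = g_1^{-1}(t)$ and $\Sigma_b = f^{-1}(b)$ is mostly below $\Sigma'_t$; since $\Sigma_a$ and $\Sigma_b$ are isotopic copies of $\Sigma$ lying on the two sides of $\Sigma'_t$, this exhibits $\Sigma$ as a surface in the product region $\Sigma'_t \times [-1,1]$ (with $L$ appearing as vertical arcs) that separates the two ends. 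Apply Lemma \ref{lem:level_surface}: $\chi(\Sigma_L) \le \chi((\Sigma'_t)_L)$, and since $\Sigma'_t$ is itself a bridge surface equivalent to $\Sigma$, equality holds, so $\Sigma$ is isotopic to a horizontal surface in $\Sigma'_t \times [-1,1]$ keeping it transverse to $L$. The upshot is a standard ``both sweep-outs can be made disjoint'' conclusion: after isotoping, $\Sigma = f^{-1}(0)$ and $\phi^{-1}(\Sigma) = g_1^{-1}(0)$ can be realized as disjoint level surfaces of a single sweep-out, which forces $\phi|_\Sigma$ to be isotopic to a power of a vertical translation — but on a bridge surface that means isotopic to the identity rel $\Sigma \cap L$.

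More carefully, I would phrase the last step via the distance hypothesis rather than the translation heuristic, since $\phi$ a priori permutes the punctures. Because $d_{\mathcal{PD}}(M,L;\Sigma) \ge 4$, no disk or once-punctured disk meridian of $V^+_L$ is within distance $1$ of one for $V^-_L$; the spanning/horizontal conclusion says the two bridge surfaces $\Sigma$ and $\phi(\Sigma)$ can be made parallel in $M$ with $L$ running as vertical arcs between them, which in particular makes the product region between them a collar, and then $\phi$ restricted to $\Sigma$ differs from the identity by an element of the mapping class group supported in that collar — i.e., it is isotopic to $\mathrm{id}|_\Sigma$ rel $\Sigma \cap L$. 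The main obstacle I expect is the case analysis in the first paragraph: verifying that positivity of spanning is genuinely preserved across all six Rubinstein--Scharlemann local moves and the two-vertex degeneration, given that $g_r$ never splits $f$. This is exactly the place where the argument of \cite{Joh10_JTP} is adapted, and the presence of the punctures $L$ (which contribute the type-(iii) edges and the new vertex types in the graphic) is what needs the extra care; everything downstream is a routine application of Lemmas \ref{lem:level_surface} and \ref{lem:intersection_of_regions}.
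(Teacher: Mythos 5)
There is a genuine gap. The lemma is a statement about the \emph{map} $\phi|_{\Sigma}$, and the only way the hypothesis ``$g_r$ spans $f$ for \emph{all} $r$'' enters the paper's proof is through a continuity argument in $r$: for each $r$ one picks $t(r)$ in the (nonempty, by spanning) interval $A_r\cap B_r=p_2(\mathrm{Cl}(\mathscr{R}_a))\cap p_2(\mathrm{Cl}(\mathscr{R}_b))$, compresses $\Sigma'_{t(r)}=g_r^{-1}(t(r))$ along innermost disks meeting $L$ at most once to get a surface $S_r$ between two levels of $f$, uses Lemmas \ref{lem:Euler characteristic after a compression} and \ref{lem:level_surface} to force $\chi(S_r-L)=\chi(\Sigma'_{t(r)}-L)$ so that $S_r$ is canonically a level surface of $f$, and thereby obtains a family of identifications $\varphi_r\colon g^{-1}(0)\to f^{-1}(0)$ with $\varphi_0=\mathrm{id}_\Sigma$ and $\varphi_1=\phi|_\Sigma$, well defined up to isotopy rel the punctures and varying continuously in $r$. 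The conclusion is exactly that $\varphi_0$ and $\varphi_1$ are isotopic rel $\Sigma\cap L$. Your proposal uses the spanning hypothesis essentially only at a single parameter value and then concludes with ``$\Sigma$ and $\phi(\Sigma)$ can be made parallel with $L$ vertical, hence $\phi|_\Sigma$ differs from the identity by a mapping class supported in the collar, i.e.\ is trivial.'' That final step is a non sequitur: $\phi$ represents a Goeritz element, so $\phi(\Sigma)=\Sigma$ holds tautologically, and parallelism (or even coincidence) of the two bridge surfaces carries no information about the isotopy class of $\phi|_\Sigma$ --- every nontrivial element of $\mathcal{G}(M,L;\Sigma)$ satisfies it. The same objection applies to the ``power of a vertical translation'' heuristic. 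What is missing is precisely the $1$-parameter family of surface identifications interpolating between $\mathrm{id}_\Sigma$ and $\phi|_\Sigma$, which is the content of the paper's proof.

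Two further, smaller points. First, the distance hypothesis $d_{\mathcal{PD}}(M,L;\Sigma)\ge 4$ is not part of this lemma and is not needed for it; in the paper it enters only through Lemma \ref{lem:split} (splitting forces $d_{\mathcal{PD}}\le 3$), so invoking it here signals a mix-up of the two halves of the proof of Theorem \ref{thm:injection}. Second, your first paragraph's case analysis aimed at showing that \emph{positive} spanning persists across the non-generic events is not required for this lemma: the construction of $\varphi_r$ only needs $A_r\cap B_r\neq\emptyset$ for every $r$ (i.e.\ spanning, in either sense), since the choice of $a(r)$ and $b(r)$, and in particular their order, does not affect the isotopy class of the resulting map; positivity via Lemma \ref{lem:positively_spanning} is used only in the splitting case to locate the critical time $r_0$.
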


\begin{proof}
For each $r \in [-1,1]$, set $A_{r}:=p_{2}(\mathrm{Cl}(\mathscr{R}_{a}))$ 
and $B_{r}:=p_{2}(\mathrm{Cl}(\mathscr{R}_{b}))$, 
where $p_{2}:[-1,1] \times [-1,1] \rightarrow [-1,1]$ denotes the projection onto 
the second coordinate. 
Since $g_r$ spans $f$, $A_{r}$ and $B_{r}$ have non-empty intersection. 
Indeed,  $A_{r} \cap B_{r}$ is a closed interval in $[-1,1]$ 
because $\mathrm{Cl}(\mathscr{R}_{a} )$ and $\mathrm{Cl}(\mathscr{R}_{b})$ 
are vertically convex subsets of $[-1,1] \times [-1,1]$.   
Fix $r \in [-1,1]$.
We define the map $\varphi_r$ from the surface $g^{-1}(0)$ to $f^{-1}(0)$ 
that sends the points $g^{-1}(0) \cap L$ to $f^{-1}(0) \cap L$
as follows.  

Let $t (r)$ be an interior point of the closed interval $A_{r} \cap B_{r}$. 
There are points $a(r)$ and  $b(r)$ in $[-1,1]$ such that 
$(a(r), t (r)) \in \mathscr{R}_{a}$ and  $(b(r), t (r)) \in \mathscr{R}_{b}$, respectively. 
Set $\Sigma_{a(r)}:= f^{-1} (a(r))$, $\Sigma_{b(r)} := f^{-1} (b(r))$ and $\Sigma'_{t(r)} := g^{-1} (t(r))$. 
Since $\Sigma_{a(r)}$ is mostly above $\Sigma'_{t (r)}=g_{r}^{-1}(t (r))$ 
while $\Sigma_{b(r)}$ is mostly below $\Sigma'_{t (r)}$, 
we obtain a surface $S_r$ lying within the product region between $\Sigma_{a(r)}$ and $\Sigma_{b(r)}$  
by repeatedly compressing $\Sigma'_{t (r)}$ 
along the innermost disks intersecting $L$ at most once in $\Sigma_{a(r)} \cup \Sigma_{b(r)}$ as long as possible.
By the construction,  
the surface $S_{r}$ separates $\Sigma_{a(r)}$ from $\Sigma_{b(r)}$. 
See Figure~\ref{fig:compressing}. 

\begin{figure}[htbp]
\begin{center}
\includegraphics[width=10cm,clip]{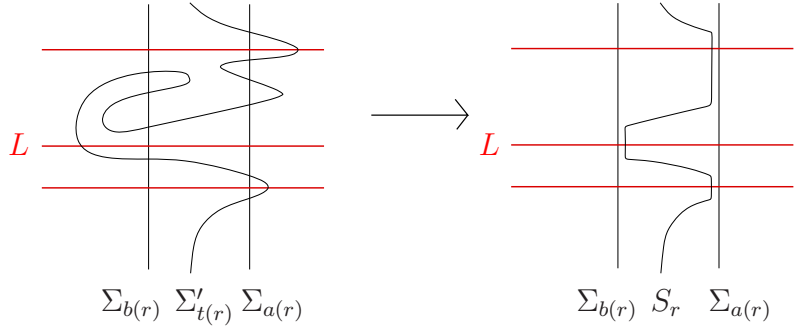}
\begin{picture}(400,0)(0,0)
\put(133,0){$\Sigma_{a(r)}$}
\put(80,0){$\Sigma_{b(r)}$}
\put(108,0){$\Sigma'_{t(r)}$}
\put(45,59){\color{red}$L$}
\put(310,0){$\Sigma_{a(r)}$}
\put(260,0){$\Sigma_{b(r)}$}
\put(223,59){\color{red}$L$}
\put(288,0){$S_{r}$}
\end{picture}
\caption{Compressing along innermost simple closed curves in $\Sigma_{a(r)} \cup \Sigma_{b(r)}$ iterativlely to form the surface $S_{r}$.}
\label{fig:compressing}
\end{center}
\end{figure}

We argue that $S_r$ is canonically isotopic to a level surface of the sweep-out $f$ 
keeping $S_{r}$ transverse to the link $L$ throughout the isotopy. 
By Lemma~\ref{lem:Euler characteristic after a compression}
we have $\chi(S_{r} - L) \geq \chi (\Sigma'_{t(r)} - L )$. 
On the other hand, 
since $S_{r}$ separates $\Sigma_{a(r)}$ from $\Sigma_{b(r)}$, 
we have $\chi(S_{r} - L) \le \chi(\Sigma'_{t (r)} - L)$ 
by Lemma~\ref{lem:level_surface}. 
Thus,  
we have  $\chi(S_{r} - L) = \chi(\Sigma'_{t (r)} - L)$. 
Again, by Lemma~\ref{lem:level_surface}, 
$S_r$ is isotopic to a level surface of the sweep-out $f$ 
with keeping the surfaces transverse to $L$ throughout. 

By the argument above, 
it follows that $S_r$ coincides with 
$\Sigma'_{t (r)}$ away from some disks, each of which intersects $L$ at most once. 
Thus, there is a canonical identification of $S_r$ with $\Sigma'_{t (r)}$. 
Therefore, we have the following map: 
$$g^{-1}(0) \rightarrow g_{r}^{-1}(t (r)) = 
\Sigma_{t (r)}^{\prime} \rightarrow S_{r} \rightarrow f^{-1}(0).$$ 
Note that all maps are uniquely defined up to isotopy 
(with fixing the intersection points between the surface of concern and $L$).
It is clear that the composition map can be chosen 
so that it sends $g^{-1}(0) \cap L$ to $f^{-1}(0) \cap L$. 
Define the map $\varphi_{r} : g^{-1}(0) \to f^{-1}(0)$ by such a composition map.   

We shall now show that $\phi|_{\Sigma}$ is isotopic to the identity relative to $\Sigma \cap L$.  
There is the canonical identification of $f^{-1}(0) = \Sigma$ with 
$g^{-1}(0)$ because 
$g$ is the result of perturbing $f$ slightly. 
Under this identification, 
it holds that $\varphi_{0}=\mathrm{id}_{\Sigma}$ and $\varphi_{1}=\phi|_{\Sigma}$.  
It is clear that the values of $t (r)$ can be chosen  
so that it varies continuously. 
Although perhaps the points $a(r)$ and $b(r)$ 
do not vary continuously at some finitely many values of $r$, 
the deformation of $\varphi_r$ remains to be continuous even around such values: 
it is easily seen that 
the choice of $a(r)$ or $b(r)$ does not affect 
the definition of the map $g_{r}^{-1}(t (r)) \rightarrow f^{-1}(0)$ 
in the above argument modulo isotopy. 
Thus, we conclude 
that for any $r, r' \in [0,1]$, $\varphi_r$  and $\varphi_{r'}$ are isotopic 
fixing $\Sigma \cap L$, which shows the proof. 
\end{proof}

\begin{lemma}\label{lem:split}
If there exists $r \in [0,1]$ such that $g_{r}$ splits $f$, 
then $d_{\mathcal{PD}}(M,L;\Sigma) \le 3$. 
\end{lemma}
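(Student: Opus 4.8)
The plan is to exploit the hypothesis that $g_r$ splits $f$ for some fixed $r$, which means there is a horizontal line $[-1,1]\times\{t\}$ that meets neither $\mathscr{R}_a$ nor $\mathscr{R}_b$ — or, since $\mathrm{Cl}(\mathscr{R}_a)$ and $\mathrm{Cl}(\mathscr{R}_b)$ are vertically convex and disjoint by Lemma~\ref{lem:intersection_of_regions}, to look at the \emph{vertical} structure instead. The key observation is that for a fixed generic value of $t$, as $s$ varies over $[-1,1]$ we travel along a horizontal arc; the labels $\mathscr{R}_a$ and $\mathscr{R}_b$ occur near the left and right edges $\{-1\}\times[-1,1]$ and $\{1\}\times[-1,1]$ (since there $\Sigma_s$ is within a neighborhood of a spine of $f$, hence every component of its intersection with either side of $\Sigma'_t$ lies in a disk with at most one puncture). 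So for the splitting $t$, the horizontal arc $[-1,1]\times\{t\}$ starts unlabeled-in-$\mathscr{R}_a$-sense near $s=-1$ but actually is labeled $\mathscr{R}_a$ near $s=\pm 1$; since it avoids $\mathscr{R}_a$ entirely by the splitting hypothesis this forces a contradiction unless we reorganize — so really I should be careful about which regions sit against which edges. The correct setup: near $s=\pm 1$ the surface $\Sigma_s$ is a thin neighborhood of a spine of $f$, so $\Sigma_s$ is mostly above \emph{and} mostly below $\Sigma'_t$, but these regions are disjoint, contradiction — hence this cannot happen, and actually the labels near $s=\pm1$ must be worked out more delicately using which spine. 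Let me instead take the standard route: fix $r$ with $g_r$ splitting $f$, and fix $t_0$ so that $[-1,1]\times\{t_0\}$ misses $\mathscr{R}_a\cup\mathscr{R}_b$.

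First I would record that, walking along $[-1,1]\times\{t_0\}$ from $s=-1$ to $s=1$, the endpoints correspond to $\Sigma_s$ degenerating to a spine of $f$: near $s=-1$, $V_s^-$ is a neighborhood of the negative spine of $f$ and near $s=1$, $V_s^+$ is a neighborhood of the positive spine of $f$. I would then analyze the \emph{other} pair of labels that tile the graphic: the regions where $\Sigma_s$ is mostly above / mostly below need not be the only labeled regions, but along a horizontal arc missing $\mathscr{R}_a$ and $\mathscr{R}_b$ there must be, for every $s$, at least one spanning obstruction — concretely, a component of $\Sigma'_{t_0}\cap V_s^{\pm}$ that is \emph{essential} (not in a once-punctured disk) in $\Sigma'_{t_0}-L$. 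This is the heart of it: an essential simple closed curve on $\Sigma'_{t_0}$ that is disjoint from $\Sigma_s$, together with the fact that for $s$ near $-1$ everything is on one side. The standard Johnson/Rubinstein--Scharlemann argument produces, at the extreme values of $s$, compressing disks or once-punctured compressing disks: near $s=-1$, components of $\Sigma'_{t_0}\cap V_s^-$ that are essential in $\Sigma'_{t_0}-L$ can be capped off by meridian disks (with at most one puncture) of $V_s^-$, hence give elements of $\mathcal{PD}(V^-_L)$ once we isotope to the fixed $\Sigma=f^{-1}(0)$; symmetrically near $s=1$ for $\mathcal{PD}(V^+_L)$.

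The key steps, in order: (1) Fix $r$ with $g_r$ splitting $f$ and $t_0$ with $[-1,1]\times\{t_0\}\cap(\mathscr{R}_a\cup\mathscr{R}_b)=\emptyset$; replace $g$ by $g_r$ and $\Sigma'_{t_0}$ by $S:=g_r^{-1}(t_0)$, a bridge surface isotopic to $\Sigma$ through bridge surfaces of $L$, so that $\mathcal{PD}$-distances computed with $S$ equal those computed with $\Sigma$. (2) For each $s\in(-1,1)$, since $(s,t_0)\notin\mathscr{R}_a$, some component of $S\cap V_s^-$ is essential in $S-L$; choose $\alpha^-(s)$ to be one such curve that is outermost, so that it bounds a disk (with at most one puncture) in $V_s^-$ component of a small pushoff. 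Symmetrically, since $(s,t_0)\notin\mathscr{R}_b$, get $\alpha^+(s)$ essential in $S-L$ bounding a disk with $\le 1$ puncture in $V_s^+$. (3) Take $s=s_-$ close to $-1$ and $s=s_+$ close to $1$: then $\alpha^-(s_-)\in\mathcal{PD}(V_{s_-}^-)=\mathcal{PD}(V^-_L)$ (identifying via the sweep-out) and $\alpha^+(s_+)\in\mathcal{PD}(V^+_L)$. (4) Bound $d_{\mathcal{C}(S-L)}(\alpha^-(s_-),\alpha^+(s_+))$: both are disjoint from $S\cap V_s^\pm$-pieces, and the crucial point is that as $s$ increases from $s_-$ to $s_+$, the curves $\alpha^-(s)$ can be tracked across the finitely many edges of the graphic, changing by disjointness or by a single compression at a vertex, so they sweep out a path in $\mathcal{C}(S-L)$ of controlled length; combined with a comparison between $\alpha^-(s)$ and $\alpha^+(s)$ (both disjoint from $S$, and each essential, hence at distance $\le 2$ in $\mathcal{C}(S-L)$ when they are not already disjoint, but actually one can arrange they can be connected through $S\cap\Sigma_s$ which is a $1$-manifold giving distance $\le 1$ through an essential curve disjoint from both, when such exists), this yields $d_{\mathcal{PD}}(M,L;\Sigma)\le 3$.

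The main obstacle I anticipate is step (4): controlling how the essential intersection curve $\alpha^-(s)$ evolves as $(s,t_0)$ crosses edges and vertices of the graphic, and in particular ruling out that at some crossing \emph{every} essential component on the $V_s^-$ side disappears simultaneously while none has yet appeared on the $V_s^+$ side — but this cannot happen precisely because $(s,t_0)$ is never in $\mathscr{R}_a$ nor $\mathscr{R}_b$, so at every $s$ \emph{both} an essential curve bounding a $\mathcal{PD}$-disk below and one above exist, and at a crossing the old and new choices are disjoint from a common curve (a nearby level curve of $g|_{\Sigma_s}$ or an adjacent intersection component). Assembling these local bounds into the global estimate $3$, and matching the edge/vertex case analysis of Section~\ref{sec:sweep-outs} (pairs of pants, once-punctured annuli) with the bookkeeping of essential versus inessential components in $S-L$, is where the real work lies; the rest is the routine translation between sweep-out level surfaces and the fixed bridge surface $\Sigma$ via the ambient isotopy.
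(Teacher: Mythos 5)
Your outline does not reconstruct the mechanism that actually yields the bound $3$, and this is a genuine gap rather than deferred bookkeeping. You fix the splitting time $r$ itself, choose a horizontal line $[-1,1]\times\{t_0\}$ missing $\mathscr{R}_{a}\cup\mathscr{R}_{b}$, and then propose (step (4)) to track an essential curve $\alpha^{-}(s)$ across the graphic from $s$ near $-1$ to $s$ near $1$, changing it at each edge or vertex crossed. Each such crossing can change the isotopy class, and the number of crossings along the horizontal arc is not bounded by any universal constant, so "a path of controlled length" gives at best a bound depending on the graphic, not $3$; nothing in your plan ever compares a curve bounding a $\mathcal{PD}$-disk below with one bounding a $\mathcal{PD}$-disk above at a single common level. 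The paper avoids this entirely by not working at the splitting time: since $g_{0}$ spans $f$ positively (Lemma~\ref{lem:positively_spanning}) and some $g_{r}$ splits $f$, there is a critical time $r_{0}$ at which the projections of $\mathrm{Cl}(\mathscr{R}_{a})$ and $\mathrm{Cl}(\mathscr{R}_{b})$ to the $t$-axis meet in a single height $t$; genericity (Lemma~\ref{lemm:generic}) then forces the arc $[-1,1]\times\{t\}$ to meet $\mathrm{Cl}(\mathscr{R}_{a})\cup\mathrm{Cl}(\mathscr{R}_{b})$ in exactly two vertices $(a,t)$ and $(b,t)$ of the graphic. The essential level loops $l_{a}$ and $l_{b}$ of $f|_{\Sigma'_{t}}$ just inside these vertices are level sets of one function on one surface, hence disjoint (the middle distance $1$), and a local analysis of the two vertices (Cases A1--A4, B1--B5) shows each is within distance $1$ of $\mathcal{PD}(V^{+}_{L})$, respectively $\mathcal{PD}(V^{-}_{L})$; that is where $1+1+1=3$ comes from. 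Without passing to this transition time, the estimate $3$ is not in reach by your route.

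There are also two local errors that would block even the pieces you do sketch. First, $(s,t_{0})\notin\mathscr{R}_{a}$ means, by the definition of mostly above, that some component of $\Sigma_{s}\cap {V'_{t_0}}^{-}$ is not contained in a disk with at most one puncture in $\Sigma_{s}-L$; it does not say that a component of $\Sigma'_{t_0}\cap V_{s}^{-}$ is essential in $\Sigma'_{t_0}-L$, so your step (2) reads the labels on the wrong surface (your discussion of the labels near $s=\pm1$ runs into exactly this confusion). Second, even granting an essential intersection curve, an "outermost" such curve has no reason to bound a disk or once-punctured disk in $V_{s}^{\pm}$: in the paper such $\mathcal{PD}$-disks are produced only from the special tangency/puncture configurations at the two graphic vertices, followed by repeated innermost-disk compressions and the standing assumption (justified by Lemma~\ref{lem:Haken}) that no meridian of $L$ bounds a disk in $M-L$. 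Your step (3) asserts membership in $\mathcal{PD}(V^{\mp}_{L})$ without any such argument.
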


\begin{proof}
We denote by $\pi_0$ the natural projection from the preimage 
$f^{-1} ((-1, 1))$ of the open interval $(-1,1)$ to $\Sigma$ that 
maps $f^{-1} ((-1, 1)) \cap L$ to $\Sigma \cap L$. 
By Lemma~\ref{lem:positively_spanning}, $g_{0}$ spans $f$ positively. 
Thus, there exists a time $r_0$ such that 
\begin{itemize}
\item $g_{r}$ spans $f$ positively for all $r<r_{0}$, and 
\item $A_{r_{0}} \cap B_{r_{0}}=\{t\}$.  
\end{itemize}
In the following, we consider the graphic defined by $f \times g_{r_0}$. 
By Lemma \ref{lemm:generic}, 
the arc $[-1,1] \times \{t\}$ must intersect the region $\mathrm{Cl}(\mathscr{R}_a) \cup \mathrm{Cl}(\mathscr{R}_b)$ 
in exactly two vertices of the graphic.  
Let $(a,t ) \in \mathrm{Cl}(\mathscr{R}_{a})$ and $(b,t ) \in \mathrm{Cl}(\mathscr{R}_{b})$ 
be coordinates of such vertices. 
We note that $b < a$. 
Let us consider the points near these vertices shown in 
Figure~\ref{fig:nbd_of_vertices}:  
their coordinates are $(a_{-}, t )$, $(b_{+},t )$, 
$(a_{\pm},t _{\pm})$ and $(b_{\pm},t _{\pm})$.

\begin{figure}[htbp]
\begin{center}
\includegraphics[width=6cm,clip]{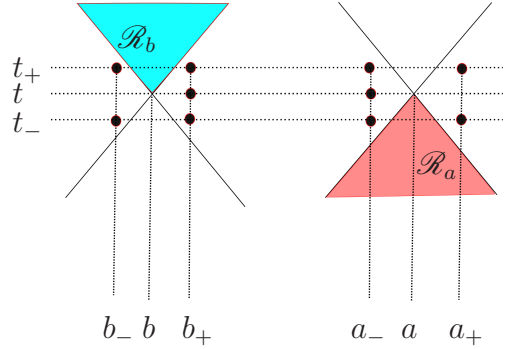}
\begin{picture}(400,0)(0,0)
%\put(90,50){$s$}
\put(140,110){$\mathscr{R}_{b}$}
\put(253,62){$\mathscr{R}_{a}$}
\put(247,0){$a$}
\put(228,0){$a_{-}$}
\put(265,0){$a_{+}$}
\put(149,0){$b$}
\put(134,0){$b_{-}$}
\put(164,0){$b_{+}$} 
\put(100,89){$t$}
\put(100,78){$t_{-}$}
\put(100,99){$t_{+}$}
\end{picture}
\caption{Small perturbed points of the vertices.}
\label{fig:nbd_of_vertices}
\end{center}
\end{figure}

We set
$\Sigma_{s}:=f^{-1}(s)$, 
$\Sigma_{t}^{\prime}:=g_{r_0}^{-1}(t)$ as before. 
Set 
$f_{0}:=f|_{\Sigma'_{t}}$ and $f_{\pm}:=f|_{\Sigma'_{t_{\pm}}}$.  
Note that the functions $f_{\pm}$ are Morse away from the preimages of $\pm1$. 

We think about the function $f_{0}$. 
Let $\mathcal{L}_a$ be the set of simple closed curves of $f_{0}^{-1}(a_-)$ 
that are essential in $\Sigma_{a_-}-L$.  
Similarly, let $\mathcal{L}_b$ be the set of simple closed curves of $f_{0}^{-1}(b_+)$ 
that are essential in $\Sigma_{b_+}-L$. 
We note that $\mathcal{L}_a \not= \emptyset$ ($\mathcal{L}_b \not= \emptyset$, respectively) 
because $(a_-,t)$ ($(b_+,t)$, respectively) does not lie in $\mathscr{R}_{a} \cup \mathscr{R}_{b}$. 
Let $l_a$ and $l_b$ be arbitrary simple closed curves 
in $\mathcal{L}_a$ and $\mathcal{L}_b$, respectively. 

By the choice of $r_0$, 
the same argument of the proof of Lemma~\ref{lem:span} 
shows that we can find a natural map  
$\rho_0 : f_0^{-1}([b_+ , a_- ])  \to \Sigma_0 = \Sigma$ 
that extends to a homeomorphism $\hat{\rho}_0$ from the whole surface $\Sigma'_{t}$ to $\Sigma$ with $\hat{\rho}_0(\Sigma'_{t} \cap L) = \Sigma \cap L$.  
Since both $l_{a}$ and $l_{b}$ are level loops of $f_{0} : \Sigma'_t \to (-1,1)$, 
they are disjoint. 
Therefore, the images $\rho_0(l_a)$ and $\rho_0(l_b)$ in $\Sigma_{0}=\Sigma$ 
are also disjoint. 
In other words, we have 
\begin{eqnarray}\label{eq:1}
d_{\mathcal{C}(\Sigma_{L})} (\rho_0(l_{b}), \rho_0(l_{a}) ) \le 1, 
\end{eqnarray}
where 
we regard $\rho_0(l_{a})$ and $\rho_0(l_{b})$ as vertices of $\mathcal{C}(\Sigma_{L})$. 
The projection $\pi_0$ also takes $l_a$ and $l_b$ to simple closed curves in $\Sigma$, which may have non-empty 
intersection. 
However, we see from the definition of $\rho_0$ that $\pi_0(l_a)$ and $\rho_0 (l_a)$ 
($\pi_0(l_a)$ and $\rho_0 (l_a)$, respectively) are homotopic, hence, isotopic. 
Therefore, if we regard $\pi_0(l_{a})$ and $\pi_0(l_{b})$ as vertices of $\mathcal{C}(\Sigma_{L})$, we can write 
\begin{eqnarray}\label{eq:1'}
d_{\mathcal{C}(\Sigma_{L})} (\pi_0(l_{b}), \pi_0(l_{a}) ) \le 1. 
\end{eqnarray}

We next show the following inequality: 

\begin{eqnarray}\label{eq:2}
d_{\mathcal{C}(\Sigma_{L})}(\pi_0 (\mathcal{L}_a),\mathcal{PD}(V^{+}_{L})) \le 1. 
\end{eqnarray} 

We note that 
any level loop of $f^{-1}_{0}(a_-)$ can also be regard as loops of each of $f^{-1}_{\pm}(a_-)$ 
since the points $(a_{-}, t)$ and $(a_{-},t_{\pm})$ are in the same component of the complementary region of the graphic. 
In the following, for simplicity, we shall not distinguish between 
a level loop of $f^{-1}_{0}(a_-)$ and the corresponding loops of $f^{-1}_{\pm}(a_-)$ 
in their notations.    
Let $l_a \in \mathcal{L}_a$. 
Let us first consider the function $f_{-}$.  
Since the point $(a,t_{-})$ lies within $\mathscr{R}_{a}$, 
as we pass from the level $a_-$ to the level $a$, the simple closed curve $l_a$ turns into one or two 
inessential simple closed curves in 
$\Sigma_a - L$. 
Therefore, in the surface $\Sigma_{a_-} - L$, either 
\begin{itemize}
\item
The simple closed curve $l_a$ bounds a twice-punctured disk, see Figure~\ref{fig:level_set_f_-}~(i) and (ii); or 
\item
The simple closed curve $l_a$ cobounds with another essential simple closed curve $l'_a$ an annulus that 
intersects $L$ in at most one point, see Figure~\ref{fig:level_set_f_-}~(iii),
\end{itemize} 
and the other simple closed curves of $f_-^{-1}(a_-)$ are inessential in 
$\Sigma_{a_-} - L$. 
\begin{figure}[htbp]
\begin{center}
\includegraphics[width=14cm,clip]{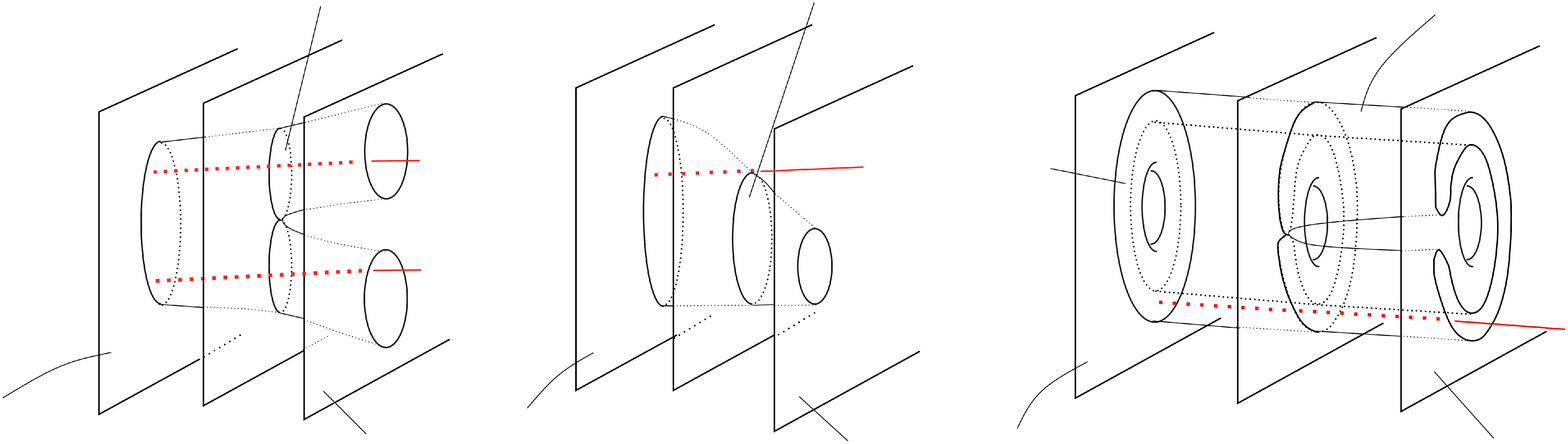}
\begin{picture}(400,0)(0,0)
\put(55,0){(i)}
\put(175,0){(ii)}
\put(325,0){(iii)}
\put(30,50){$l_a$}
\put(158,50){$l_a$}
\put(278,50){$l_a$}
\put(261,81){$l'_a$}
\put(-10,20){$\Sigma_{a_-}$}
\put(130,10){$\Sigma_{a_-}$}
\put(255,5){$\Sigma_{a_-}$} 
\put(90,5){$\Sigma_a$}
\put(220,5){$\Sigma_a$}
\put(380,5){$\Sigma_a$} 
%\put(190,130){$\Sigma'_{t_-}$}
%\put(360,130){$\Sigma'_{t_-}$}
\put(76,128){$\Sigma'_{t_-}$}
\put(207,128){$\Sigma'_{t_-}$}
\put(367,125){$\Sigma'_{t_-}$}
\end{picture}
\caption{Potential configurations of $l_a$ in $\Sigma'_{t_-}$.}
\label{fig:level_set_f_-}
\end{center}
\end{figure}
As explained above, 
the natural map $\rho_0:f_0^{-1}([b_+ , a_- ]) \rightarrow \Sigma_0=\Sigma$ 
can be extended to the map $\hat{\rho}_0$ defined on the whole surface. 
The same thing still holds for the natural map from $f_-^{-1}([b_+ , a_- ])$ to $\Sigma$. 
Due to the existence of an extension of the natural map 
we see that, in the surface $\Sigma'_{t_-} - L$, either 
\begin{itemize}
\item
The simple closed curve $l_a$ bounds a twice-punctured disk; or 
\item
The simple closed curve $l_a$ cobounds with another simple closed curve 
$l'_a$ an annulus that intersects $L$ in at most one point,
\end{itemize} 
according to which of the above two cases of the configuration of $l_a$ in $\Sigma_{a_-} - L$ occurs. 
The other simple closed curves of $f^{-1}_{-}(a_{-})$  are inessential even in $\Sigma'_{t_-} - L$.

Let us next consider the function $f_{+}$. 
Recall that we denote the simple closed curve in 
$\Sigma'_{t_+}$ corresponding to $l_a \subset \Sigma'_{t}$ by the same symbol $l_a$. 

\noindent
\textbf{Case~A:} The simple closed curve $l_a$ bounds a twice-punctured disk 
in $\Sigma_{t_+}-L$ (and hence in $\Sigma_{a_-}-L$). 

Let $P$ be the twice-punctured disk in $\Sigma'_{t_+}-L$ bounded by $l_a$ 
(note that such a subsurface is unique 
because $(\mathrm{genus}(\Sigma),n) \not= (0,2)$). 
We note that, in this case, $\mathcal{L}_a=\{l_a\}$.  
As we pass from the level $a_-$ to the level $a$, there are the following four cases to consider. 

\noindent 
\textbf{Case~A1:}  
One or two new simple closed curves are created away from $l_a$ (Figure \ref{fig:level_set_f_+_case1}).  
\begin{figure}[htbp]
\begin{center}
\includegraphics[width=4cm,clip]{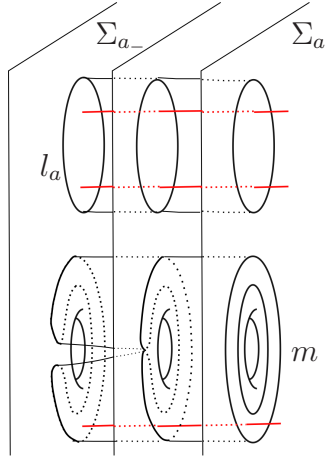}
\begin{picture}(400,0)(0,0)
\put(250,50){$m$}
\put(155,120){$l_a$}
\put(176,170){$\Sigma_{a_-}$}
\put(250,170){$\Sigma_a$}
\end{picture}
\caption{The simple closed curve $m \subset f_+^{-1} (a)$ in $\Sigma'_{t_+}$ in Case~A1. }
\label{fig:level_set_f_+_case1}
\end{center}
\end{figure}

We first see that at least one of the two new simple closed curves is essential in $\Sigma_a-L$. 
Suppose, contrary to our claim, that both of them are inessential in  $\Sigma_a-L$. 
As we pass from the level $a$ to the level $a_+$, 
the simple closed curve $l_{a}$ turns into one or two inessential simple closed curves in $\Sigma_{a_+}-L$. 
Thus, all of the simple closed curves in $f_+^{-1}(a_+)$ are inessential in $\Sigma_{a_+}-L$. 
However, this contradicts the fact 
that the point $(a_+,t_+)$ lies in the complement in $[-1,1] \times [-1,1]$ of $\mathscr{R}_{a} \cup \mathscr{R}_{b}$. 

Let $m$ be one of the new simple closed curves that is essential in $\Sigma_a-L$.  
Since each simple closed curve of $f^{-1}_+(a_{-})$ is inessential in $\Sigma'_{t_+}$ except for $l_a$, 
the curve $m$ is contained in a disk with at most one puncture 
in $\Sigma'_t - L$. 
Thus, $m$ is also inessential in $\Sigma'_{t_+} - L$. 
Let $D \subset \Sigma'_{t_+} - L$ be a disk with at most one puncture  bounded by $m$. 
By repeatedly compressing $D$ 
along the innermost disk with at most one puncture in $\Sigma_a-L$ as long as possible, 
we finally obtain a disk $D'$ in the handlebody $V^+_{a} = f^{-1} ([a, 1])$ such that 
$\partial D' = m$ and $|D' \cap L| \leq 1 $. 
Thus, $\pi_0(m)$ is a vertex of $\mathcal{PD} (V^+_L)$. 
As shown in Figure \ref{fig:level_set_f_+_case1}, 
the inequality $d_{\mathcal{C}(\Sigma_{L})}(\pi_0(l_{a}),\pi_0(m)) \le 1$ holds. 
In consequence, we have 
$d_{\mathcal{C}(\Sigma_{L})}(\pi_0(l_{a}),\mathcal{PD}(V^{+}_{L})) \le 1$. 

\noindent 
\textbf{Case~A2:} The simple closed curve $l_a$ and another simple closed curve $c$ in $f_+^{-1}(a_-)$ 
are pinched together to produce a new simple closed curve $m$ (Figure \ref{fig:configuration_of_curves_A2}).  

Since the point $(a,t_+)$ is in the complement in $[-1,1] \times [-1,1]$ of $\mathscr{R}_{a} \cup \mathscr{R}_{b}$, 
the simple closed curve $m$ is essential in $\Sigma_a-L$. 
We also see that $c$ bounds a once-punctured disk in $\Sigma_{a_-}-L$. 
Suppose, contrary to our claim, that $c$ bounds a disk in $\Sigma_{a_-}-L$.  
Hence $\pi_0(m)$ is isotopic to $\pi_0(l_a)$ in $\Sigma-L$. 
As we pass from the level $a$ to the level $a_+$, 
the simple closed curve $m$ turns into one or two inessential simple closed curves in $\Sigma_{a_+}-L$, 
but this is impossible because the point $(a_+,t_+)$ lies in the complement 
of $\mathscr{R}_a \cup \mathscr{R}_b$. 

By the assumption, any meridional loop of $L$ does not bound a disk in $M-L$. 
Thus, $c$ bounds no disk in $\Sigma_{t_+}-L$. 
The possible configuration in $P$ of $l_a$, $m$ and $c$ is shown in Figure \ref{fig:configuration_of_curves_A2}. 
In particular, $m$ bounds a once-punctured disk $D$ in $P-L$. 
By repeatedly compressing $D$ 
along the innermost disk with at most one puncture in $\Sigma_{a} -L$ as long as possible, we finally obtain a disk $D'$ in the handlebody $V^+_{a}$ such that 
$\partial D' = m$ and $|D' \cap L| = 1 $. 
As the points $(a_-,t_+)$ and $(a,t_+)$ can be connected by a path that intersects the graphic once, 
$d_{\mathcal{C}(\Sigma_{L})}(\pi_0(l_{a}),\pi_0(m)) \le 1$ holds. 
Therefore, it follows that $d_{\mathcal{C}(\Sigma_{L})}(\pi_0(l_{a}),\mathcal{PD}(V^{+}_{L})) \le 1$. 
\begin{figure}[htbp]
\begin{center}
\includegraphics[width=8.5cm,clip]{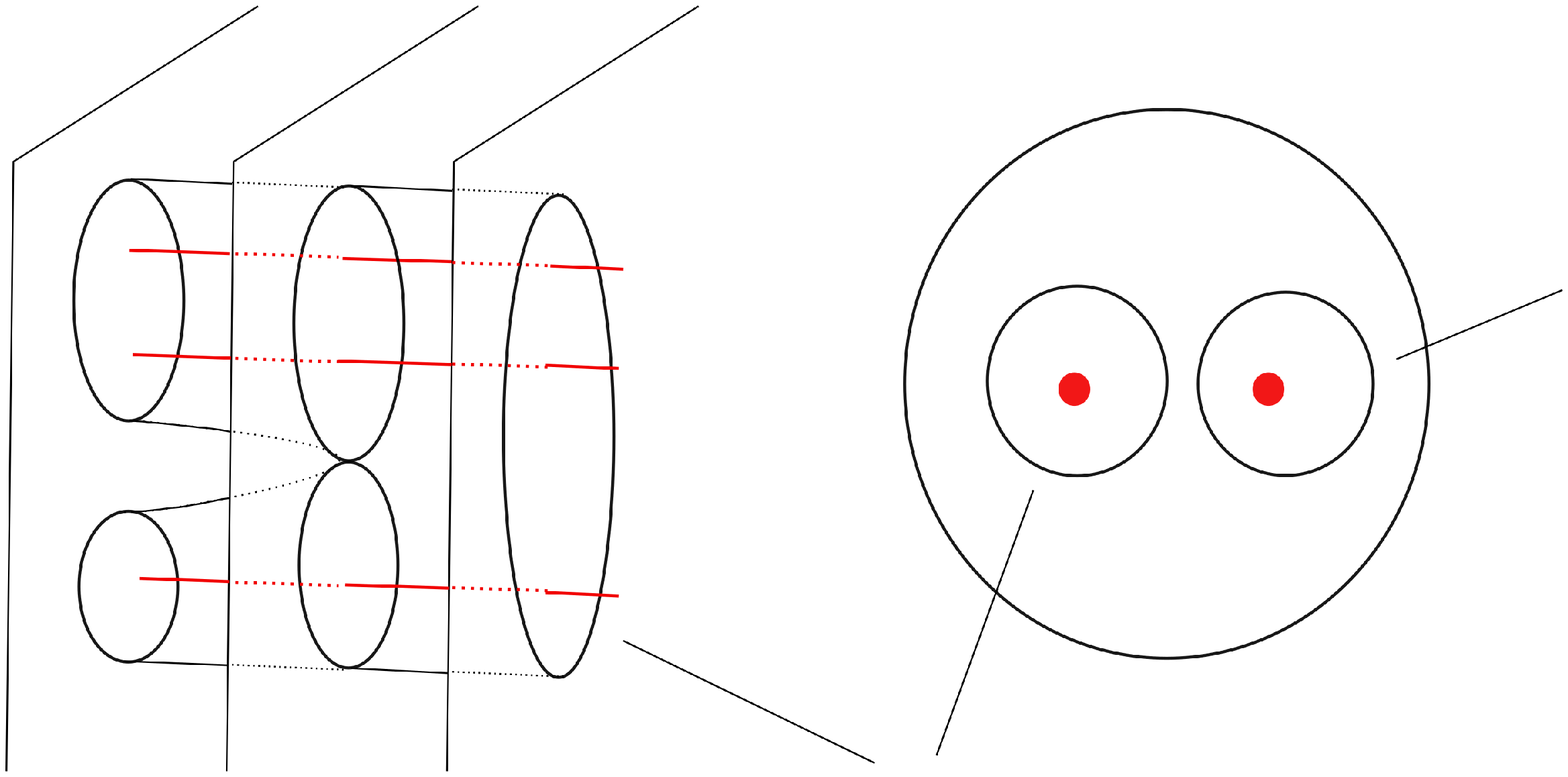}
\begin{picture}(400,0)(0,0)
\put(83,70){$l_a$}
\put(208,70){$l_a$}
\put(214,8){$m$}
\put(84,30){$c$}
\put(322,86){$c$}
\put(255,40){$P$}
\put(108,117){$\Sigma_{a_-}$}
\put(177,117){$\Sigma_a$}
\end{picture}
\caption{Case~A2.}
\label{fig:configuration_of_curves_A2}
\end{center}
\end{figure}

\noindent 
\textbf{Case~A3:} The simple closed curve $l_a$ passes through a puncture and turns into a new simple closed curve $m$ 
(Figure \ref{fig:configuration_of_curves_A3}). 

Since the point $(a,t_+)$ is in the complement in $[-1,1] \times [-1,1]$ of $\mathscr{R}_{a} \cup \mathscr{R}_{b}$, 
$m$ is essential in $\Sigma_a-L$. 
As shown in Figure \ref{fig:configuration_of_curves_A3}, 
$m$ bounds a once-punctured disk $D$ in $P-L$. 
By repeatedly compressing $D$ 
along the innermost disk with at most one puncture in $\Sigma_a -L$ as long as possible, we finally obtain a disk $D'$ in the handlebody $V^+_{a}$ such that 
$\partial D' = m$ and $|D' \cap L| = 1 $. 
As $d_{\mathcal{C}(\Sigma_{L})}(\pi_0(l_{a})),\pi_0(m)) \le 1$, 
it follows that $d_{\mathcal{C}(\Sigma_{L})}(\pi_0(l_{a}),\mathcal{PD}(V^{+}_{L})) \le 1$. 
\begin{figure}[htbp]
\begin{center}
\includegraphics[width=8cm,clip]{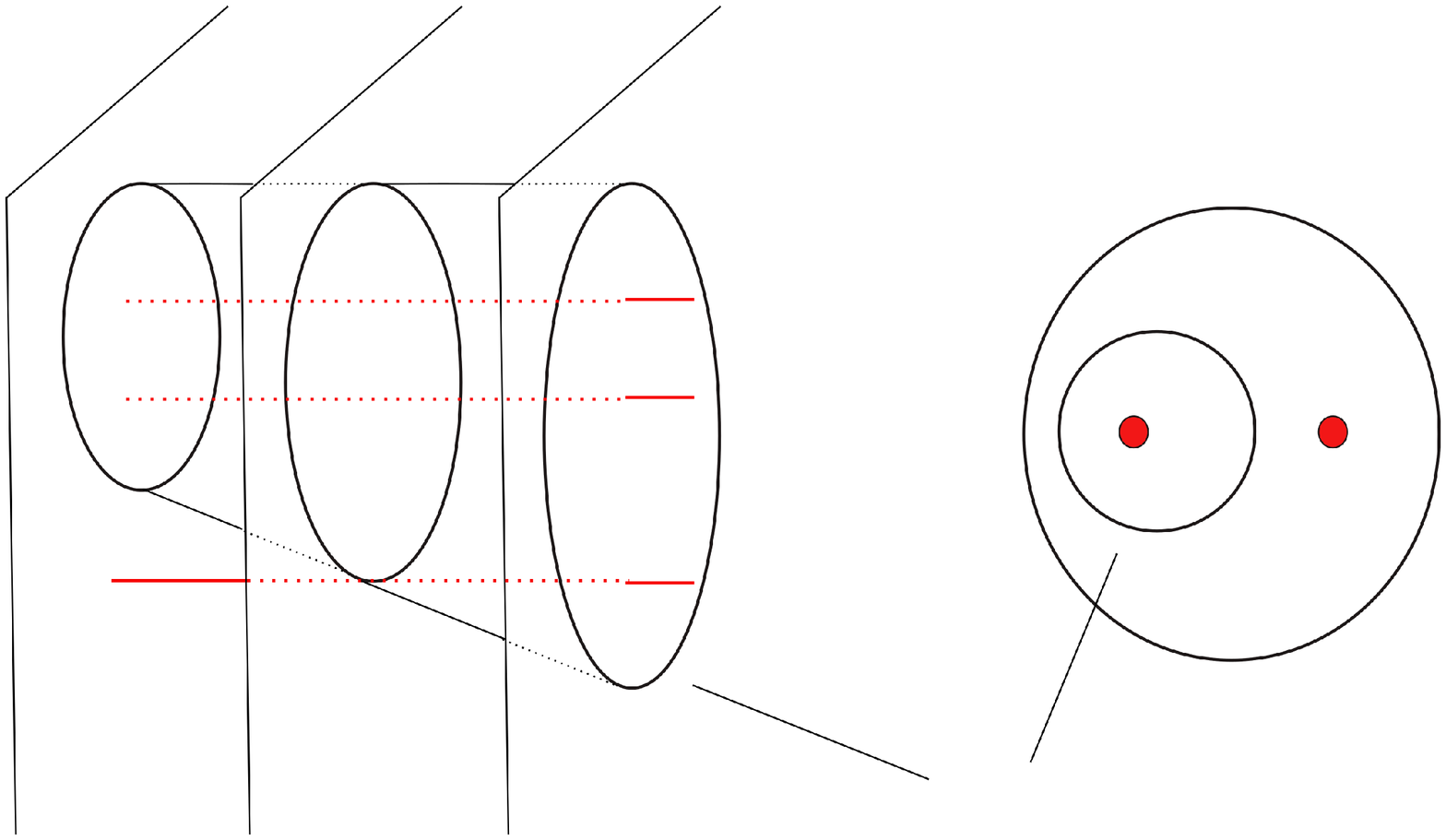}
\begin{picture}(400,0)(0,0)
\put(90,70){$l_a$}
\put(235,75){$l_a$}
\put(237,15){$m$}
\put(280,45){$P$}
\put(115,126){$\Sigma_{a_-}$}
\put(190,124){$\Sigma_a$}
\end{picture}
\caption{Case ~A3.}
\label{fig:configuration_of_curves_A3}
\end{center}
\end{figure}

\noindent 
\textbf{Case~A4:} The simple closed curve $l_a$ is pinched to produce 
two simple closed curves $m_1$ and $m_2$ 
(Figure \ref{fig:configuration_of_curves_A4}).  

There are two possible configurations of $l_a$, $m_1$ and $m_2$ in $P$.  
See Figure \ref{fig:configuration_of_curves_A4}. 
\begin{figure}[htbp]
\begin{center}
\includegraphics[width=8cm,clip]{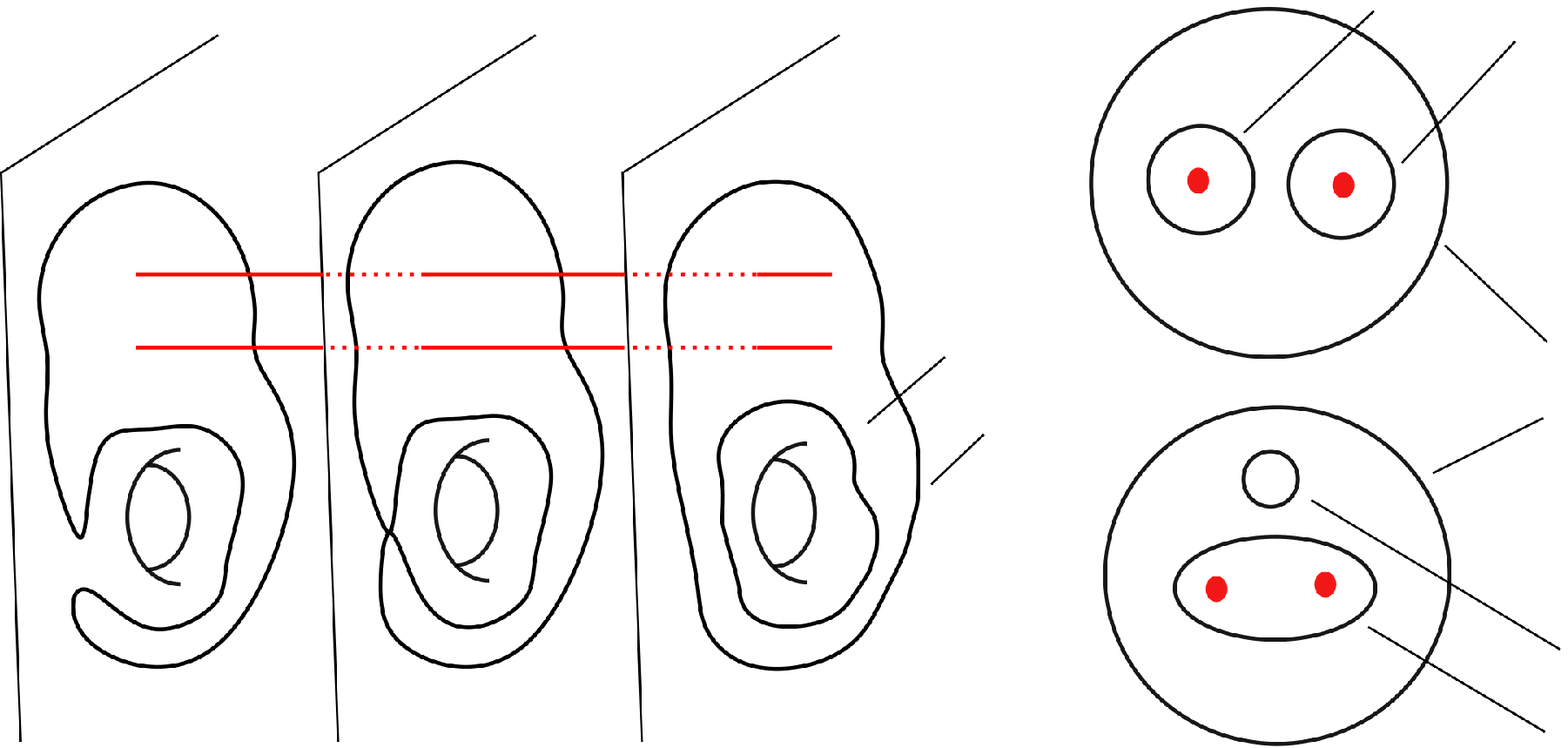}
\begin{picture}(400,0)(0,0)
\put(95,62){$l_a$}
\put(225,72){$m_1$}
\put(230,60){$m_2$}
\put(317,20){$m_1$}
\put(310,10){$m_2$}
\put(287,122){$m_1$}
\put(310,117){$m_2$}
\put(315,65){$l_a$}
\put(262,17){$P$}
\put(262,75){$P$}
\put(115,106){$\Sigma_{a_-}$}
\put(203,104){$\Sigma_a$}
\end{picture}
\caption{Case~A4.}
\label{fig:configuration_of_curves_A4}
\end{center}
\end{figure}

First, suppose that both of the two simple closed curves $m_1$ and $m_2$ bound once-punctured disks, 
which are mutually disjoint, in $P-L$. 
Since the point $(a,t_+)$ is in the complement in $[-1,1] \times [-1,1]$ of $\mathscr{R}_{a} \cup \mathscr{R}_{b}$, 
one of $m_1$ and $m_2$ is essential in $\Sigma_a-L$. 
We may assume that $m_1$ is essential in $\Sigma_a-L$. 
Let $D \subset P$ be the disk such that $\partial D=m_1$ and $|D \cap L|=1$.  
By repeatedly compressing $D$ 
along the innermost disk with at most one puncture in $\Sigma_a -L$ as long as possible, 
we finally obtain a disk $D'$ in the handlebody $V^+_{a}$ such that 
$\partial D' = m_1$ and $|D' \cap L|=1$. 
Thus, we have 
$d_{\mathcal{C}(\Sigma_{L})}(\pi_0(l_{a}),\mathcal{PD}(V_L^+))\le d_{\mathcal{C}(\Sigma_{L})}(\pi_0(l_{a}),\pi_0(m_1)) \le 1$. 

Next, suppose that $m_1$ bounds a disk in $P-L$. 
It suffices to show that $m_1$ must be essential in $\Sigma_a-L$. 
Indeed, if $m_1$ is essential in $\Sigma_a-L$, 
a similar argument as above shows that  
there exists a disk $D$ in $V_a^+-L$ such that $\partial D=m_1$.  
Thus, we have $d_{\mathcal{C}(\Sigma_{L})}(\pi_0(l_{a}),\mathcal{PD}(V_L^+))\le 1$. 

Suppose, for the sake of contradiction, 
$m_1$ is inessential in $\Sigma_a-L$. 
By the assumption, any meridional loop of $L$ does not bound a disk in $M-L$. 
Hence, $m_1$ must bound a disk in $\Sigma_a-L$. 
As we pass from the level $a$ to the level $a_+$, 
the simple closed curve $m_2$ turns into one or two simple closed curves, 
which bound once-punctured disks in $P-L$. 
Note that $\pi_0(m_2)$ is isotopic to $\pi_0(l_a)$ in $\Sigma_0-L$ 
because $m_1$ bounds a disk in $\Sigma_a-L$. 
It follows that as we pass from the level $a$ to the level $a_+$, 
the simple closed curve $m_2$ turns into one or two simple closed curves 
that is inessential in $\Sigma_{a_+}-L$, and thus 
all of the simple closed curves of $f_+^{-1}(a_+)$ are inessential in $\Sigma_{a_+}-L$. 
This contradicts the fact 
that the point $(a_+,t_+)$ does not lie in $\mathscr{R}_a \cup \mathscr{R}_b$.  
This completes the proof of the inequality (\ref{eq:2}) in Case~A.  

\noindent 
\textbf{Case~B:} The simple closed curve $l_a$ cobounds with another essential simple closed curve $l'_a$ 
an annulus in $\Sigma'_{t_+}$ (and hence in $\Sigma_{a_-}$) 
that intersects $L$ in at most one point. 

Let $A$ be the annulus in $\Sigma'_{t_+}$ bounded by $l_a$ and $l'_a$ 
(note that such an annulus is unique because $(\mathrm{genus}(\Sigma),n) \not=(1,1)$). 
We note that $\mathcal{L}_a=\{l_a,l'_a\}$. 
There are five cases to consider as we pass from the level $a_-$ to the level $a$.  

\noindent 
\textbf{Case~B1:}  
A new simple closed curve $m$ is created away from $l_a$ and $l_a'$. 

This case is same as  Case~A1. 

\noindent 
\textbf{Case~B2:} The simple closed curve $l_a$ and another simple closed curve 
$c \not= l'_a$ of $f_+^{-1}(a_-)$ are pinched together to produce a single simple closed curve $m$ 
(Figure \ref{fig:configuration_of_curves_B2}).  

We see that $c$ bounds a once-punctured disk in $\Sigma_{a_-}-L$. 
Suppose, contrary to our claim, that $c$ bounds a disk in $\Sigma_a-L$. 
Then, it follows that $\pi_0(m)$ is isotopic to $\pi_0(l_a)$ in $\Sigma_0-L$. 
As we pass from the level $a$ to the level $a_+$, 
the simple closed curves $m$ and $l'_a$ are pinched together  
to produce an inessential simple closed curve in $\Sigma_{a_+}-L$. 
This contradicts the fact 
that the point $(a_+,t_+)$ does not lie in $\mathscr{R}_a \cup \mathscr{R}_b$. 

By the assumption, any meridional loop of $L$ does not bound a disk in $M-L$. 
Thus, it follows that $|A \cap L|=1$ and $c$ bounds a once-punctured disk in $A-L$. 
The possible configuration of $l_a$, $l'_a$, $c$ and $m$ in the annulus $A$ 
is shown in Figure \ref{fig:configuration_of_curves_B2}. 
\begin{figure}[htbp]
\begin{center}
\includegraphics[width=8.5cm,clip]{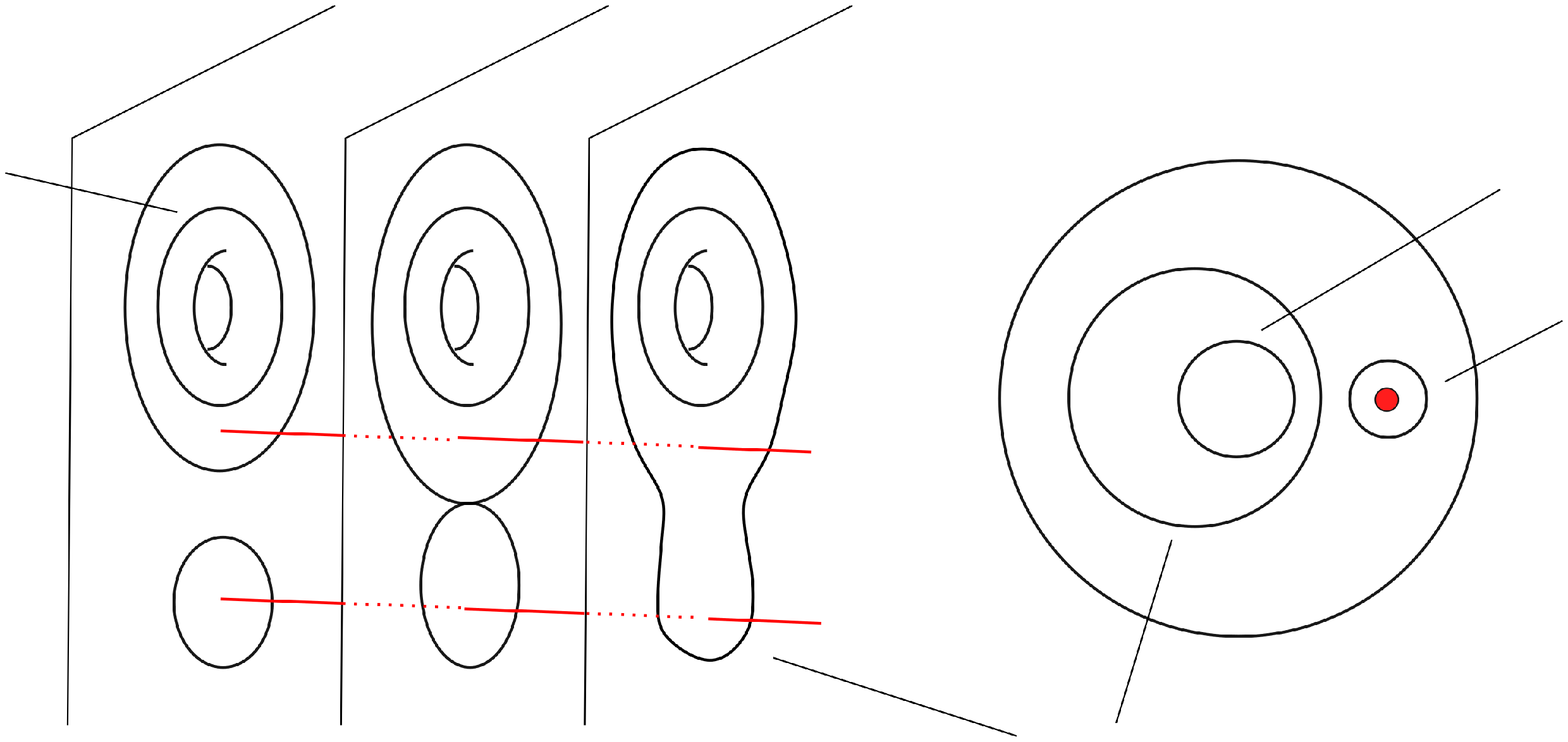}
\begin{picture}(400,0)(0,0)
\put(95,56){$l_a$}
\put(225,50){$l_a$}
\put(239,10){$m$}
\put(68,98){$l'_a$}
\put(313,97){$l'_a$}
\put(98,25){$c$}
\put(324,76){$c$}
\put(270,35){$A$}
\put(115,110){$\Sigma_{a_-}$}
\put(194,110){$\Sigma_a$}
\end{picture}
\caption{Case~B2.}
\label{fig:configuration_of_curves_B2}
\end{center}
\end{figure}

As we pass from the level $a$ to the level $a_+$,  
the simple closed curves $l_a'$ and $m$ are pinched together to produce a new single curve $m'$. 
The simple closed curve $m'$ is essential in $\Sigma_{a_+}-L$ because the point $(a_+,t_+)$ lies in the complement 
of $\mathscr{R}_a \cup \mathscr{R}_b$. 
On the other hand, $m'$ bounds a disk $D$ in $A-L$. 
By repeatedly compressing $D$ along the innermost disk in $\Sigma_{a_+}-L$, 
we obtain a disk $D'$ in $V_{a_+}^+-L$ such that 
$\partial D'=m'$. 
As $d_{\mathcal{C}(\Sigma_{L})}(\pi_0(l'_a),\pi_0(m')) \le 1$, 
it follows that $d_{\mathcal{C}(\Sigma_{L})}(\pi_0(l'_a),\mathcal{PD}(V_L^+)) \le 1$. 

\noindent 
\textbf{Case~B3:} The simple closed curve $l_a$ passes through a puncture and turns into a new simple closed curve $m$ 
(Figure \ref{fig:configuration_of_curves_B3}). 

In the annulus $A$, $m$ cobounds with $l'_a$ an annulus. 
See Figure \ref{fig:configuration_of_curves_B3}.  
\begin{figure}[htbp]
\begin{center}
\includegraphics[width=8cm,clip]{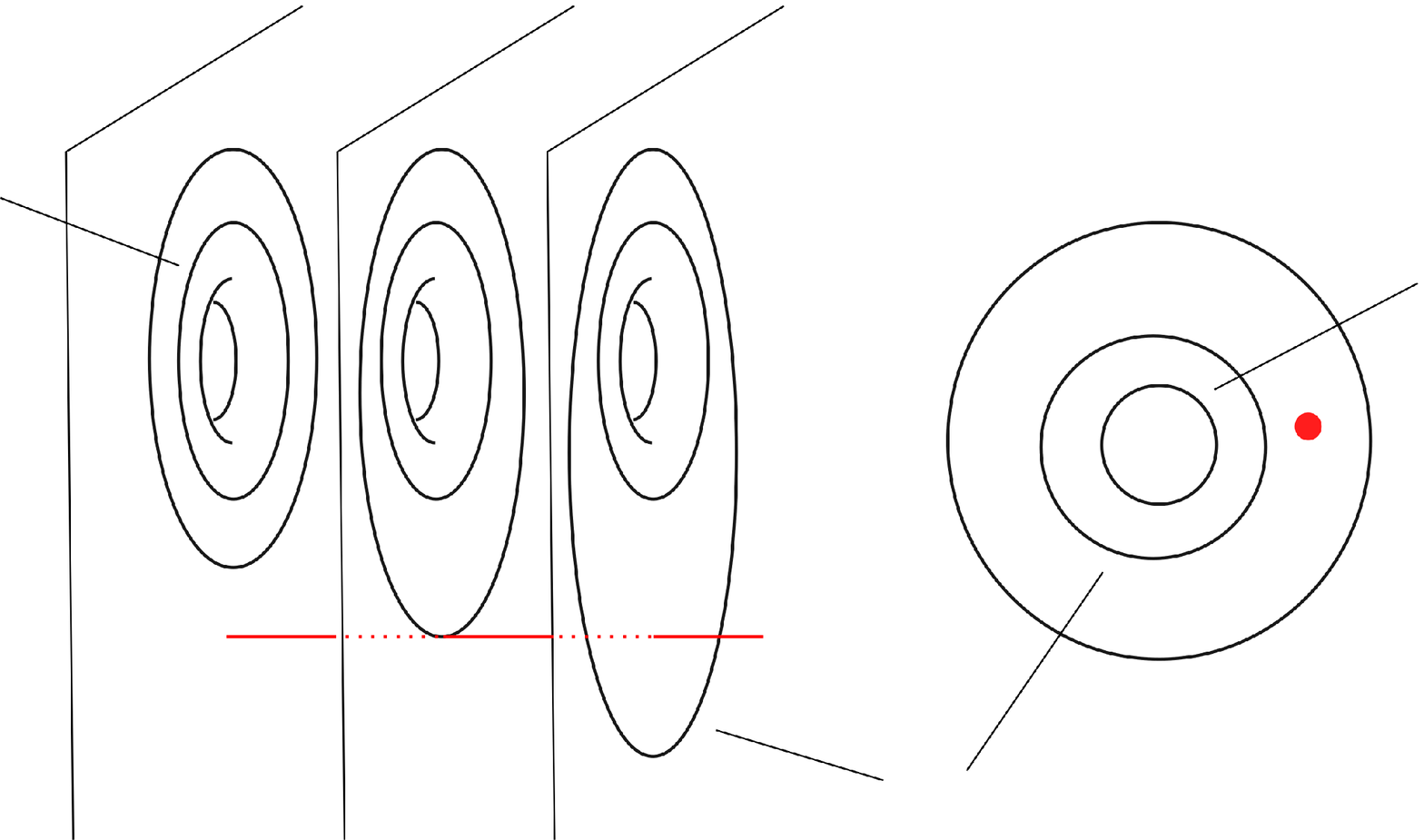}
\begin{picture}(400,0)(0,0)
\put(103,60){$l_a$}
\put(230,60){$l_a$}
\put(232,20){$m$}
\put(80,112){$l'_a$}
\put(315,102){$l'_a$}
\put(280,49){$A$}
\put(120,130){$\Sigma_{a_-}$}
\put(200,130){$\Sigma_a$}
\end{picture}
\caption{Case~B3.}
\label{fig:configuration_of_curves_B3}
\end{center}
\end{figure}
As we pass from the level $a$ to the level $a_+$,   
the simple closed curves $l_a'$ and $m$ are pinched together to produce a new single curve $m'$. 
The simple closed curve $m'$ is essential in $\Sigma_{a_+}-L$ because the point $(a_+,t_+)$ lies in the complement 
of $\mathscr{R}_a \cup \mathscr{R}_b$. 
On the other hand, $m'$ bounds a disk in $A-L$. 
By repeatedly compressing $D$ along the innermost disk in $\Sigma_{a_+}-L$, 
we obtain a disk $D'$ in $V_{a_+}^+-L$ such that 
$\partial D'=m'$. 
Therefore, we have $d_{\mathcal{C}(\Sigma_{L})}(\pi_0(l'_a),\mathcal{PD}(V_L^+)) \le 1$. 
 
\noindent 
\textbf{Case~B4:} The simple closed curve $l_a$ is pinched to produce two simple closed curves $m_1$ and $m_2$ 
(Figure \ref{fig:configuration_of_curves_B4}). 

There are two possible configurations of $l_a$, $l'_a$, $m_1$ and $m_2$ in the annulus $A$.  
See Figure \ref{fig:configuration_of_curves_B4}. 
\begin{figure}[htbp]
\begin{center}
\includegraphics[width=9cm,clip]{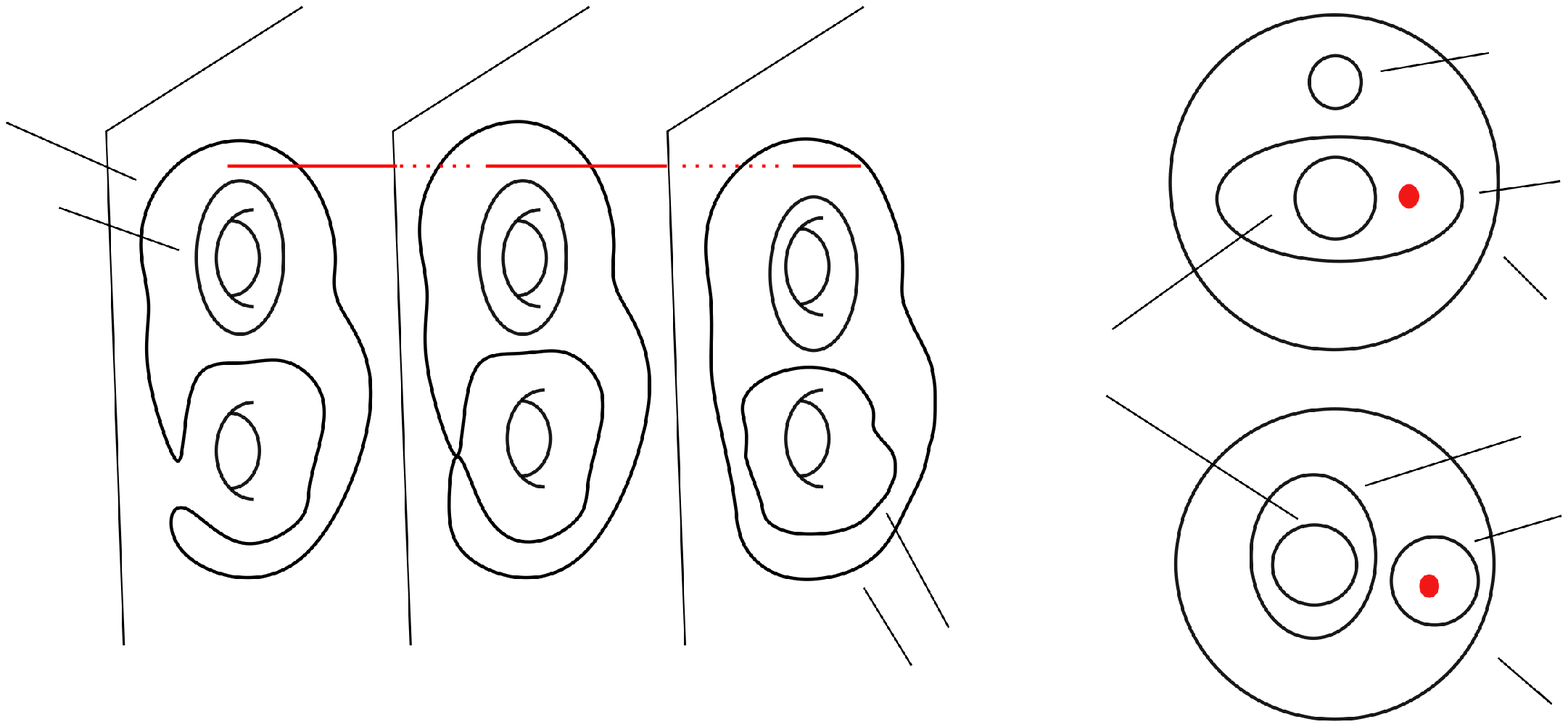}
\begin{picture}(400,0)(0,0)
\put(63,112){$l_a$}
\put(327,74){$l_a$}
\put(330,7){$l_a$}
\put(70,96){$l'_a$}
\put(245,67){$l'_a$}
\put(230,20){$m_1$}
\put(220,10){$m_2$}
\put(330,47){$m_1$}
\put(325,57){$m_2$}
\put(317,120){$m_1$}
\put(332,100){$m_2$}
\put(290,15){$A$}
\put(285,75){$A$}
\put(118,120){$\Sigma_{a_-}$}
\put(210,120){$\Sigma_a$}
\end{picture}
\caption{Case~B4.}
\label{fig:configuration_of_curves_B4}
\end{center}
\end{figure}

First, suppose that $m_1$ bounds a disk $D$ in $A-L$. 
By the assumption, any meridional loop of $L$ does not bound a disk in its complement. 
Thus, the curve $m_1$ does not bound a once-punctured disk in $\Sigma_a-L$. 
We claim that $m_1$ does not bound a disk in $\Sigma_a-L$. 
Suppose, contrary to our claim, that $m_1$ bounds a disk in $\Sigma_a-L$. 
Then, $\pi_0(m_2)$ is isotopic to $\pi_0(l_a)$ in $\Sigma_0-L$. 
As we pass from the level $a$ to the level $a_+$, 
the simple closed curves $m_2$ and $l'_a$ are pinched together to produce a single simple closed curve 
that is inessential in $\Sigma_a-L$. 
This contradicts the fact that the point $(a_+,t_+)$ does not lie in $\mathscr{R}_a \cup \mathscr{R}_b$. 
Therefore, we conclude that $m_1$ must be essential in $\Sigma_a-L$. 

By repeatedly compressing $D$ 
along the innermost disk with at most one puncture in $\Sigma_a-L$ as long as possible, 
we finally obtain a disk $D'$ in $V^+_a-L$ such that 
$\partial D' = m_1$. 
Since $d_{\mathcal{C}(\Sigma_{L})}(\pi_0(l'_a),\pi_0(m_1)) \le 1$, 
we have 
$d_{\mathcal{C}(\Sigma_{L})}(\pi_0(l'_a),\mathcal{PD}(V_L^+)) \le 1$. 

Next, suppose that $m_1$ bounds a once-punctured disk $D$ in $A-L$. 
If $m_1$ is essential in $\Sigma_a-L$, 
by repeatedly compressing $D$ 
along the innermost disk with at most one puncture in $\Sigma_a -L$ as long as possible, 
we finally obtain a disk $D'$ in the handlebody $V^+_{a}$ such that 
$\partial D' = m_1$ and $|D' \cap L| = 1 $. 
Since $d_{\mathcal{C}(\Sigma_{L})}(\pi_0(l_a),\pi_0(m_1)) \le 1$, 
it follows that $d_{\mathcal{C}(\Sigma_{L})}(\pi_0(l'_a),\mathcal{PD}(V_L^+)) \le 1$.  
Thus, in the following, we shall assume that $m_1$ is inessential in $\Sigma_a-L$. 

The simple closed curve $m_2$ cobounds an annulus with $l'_a$ in $A-L$. 
As we pass from the level $a$ to the level $a_+$, 
the simple closed curves $l'_a$ and $m_2$ are pinched together 
to produce a new single simple closed curve $m'$. 
The curve $m'$ is essential in $\Sigma_{a_+}-L$ because the point $(a_+,t_+)$ lies in the complement 
of $\mathscr{R}_a \cup \mathscr{R}_b$. 
On the other hand, $m'$ bounds a disk $D$ in $A-L$. 
By repeatedly compressing $D$ 
along the innermost disk in $\Sigma_{a_+} -L$ as long as possible, 
we finally obtain a disk $D'$ in $V_{a_+}^+-L$ such that 
$\partial D' = m'$. 
Since $d_{\mathcal{C}(\Sigma_{L})}(\pi_0(l'_a),\pi_0(m')) \le 1$, 
it follows that $d_{\mathcal{C}(\Sigma_{L})}(\pi_0(l'_a),\mathcal{PD}(V_L^+)) \le 1$. 

\noindent
\textbf{Case~B5:} 
The simple closed curves $l_a$ and $l'_a$ are pinched together   
to produce a single simple closed curve $m$ 
(Figure \ref{fig:configuration_of_curves_B5}).  

Since the point $(a,t_+)$ is in the complement in $[-1,1] \times [-1,1]$ of $\mathscr{R}_{a} \cup \mathscr{R}_{b}$, 
$m$ is essential in $\Sigma_a-L$. 
In $A-L$, $m$ bounds a disk $D$ with at most one puncture. 
See Figure \ref{fig:configuration_of_curves_B5}. 
\begin{figure}[htbp]
\begin{center}
\includegraphics[width=8cm,clip]{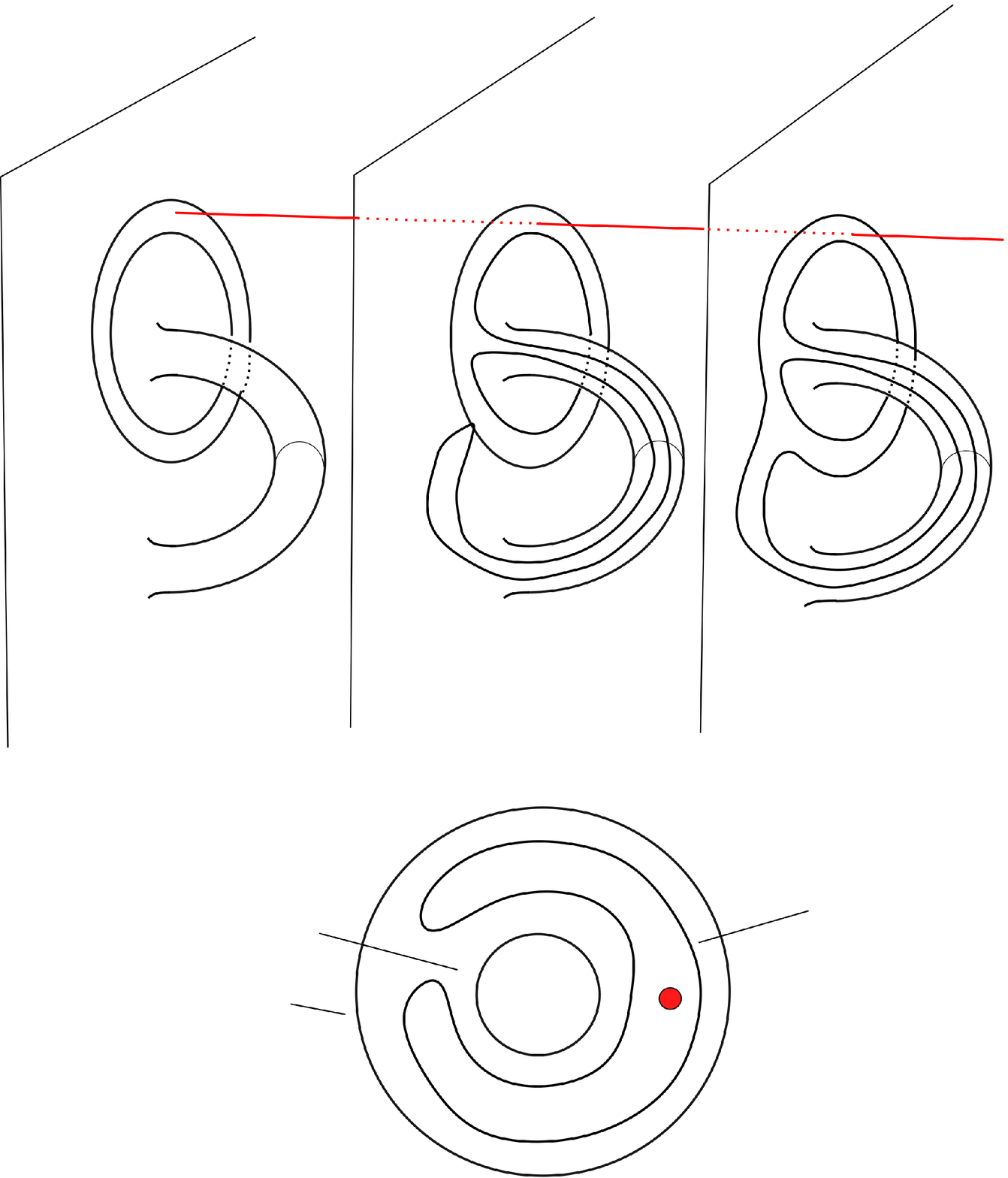}
\begin{picture}(400,0)(0,0)
\put(98,200){$l_a$}
\put(145,50){$l_a$}
\put(272,72){$m$}
\put(298,210){$m$}
\put(122,213){$l'_a$}
\put(150,65){$l'_a$}
\put(123,245){$\Sigma_{a_-}$}
\put(285,245){$\Sigma_a$}
\end{picture}
\caption{Case~B5.}
\label{fig:configuration_of_curves_B5}
\end{center}
\end{figure}
By repeatedly compressing $D$ 
along the innermost disk with at most one puncture in $\Sigma_{a} -L$ as long as possible, 
we finally obtain a disk $D'$ in the handlebody $V^+_{a}$ such that 
$\partial D' =m$ and $|D' \cap L| \leq 1 $. 
Therefore, we have 
$d_{\mathcal{C}(\Sigma_{L})}(\pi_0(l_a),\mathcal{PD}(V_L^+)) \le d_{\mathcal{C}(\Sigma_{L})}(\pi_0(l_a),\pi_0(m)) \le 1$, 
which completes the proof of the inequality (\ref{eq:2}) in Case~B. 

The symmetric argument of the proof of the inequality (\ref{eq:2}) shows the following inequality: 
\begin{eqnarray}\label{eq:3}
d_{\mathcal{C}(\Sigma_{L})}(\pi_0(\mathcal{L}_b),\mathcal{PD}(V^{-}_{L})) \le 1.
\end{eqnarray}
By the inequalities (\ref{eq:1'}), (\ref{eq:2}) and (\ref{eq:3}), for some $l_a \in \mathcal{L}_a$ and $l_b \in \mathcal{L}_b$  
we have 
\begin{align*}
d_{\mathcal{PD}}(M,L;\Sigma)  &\le  
d_{\mathcal{C}(\Sigma_{L})}(\mathcal{PD}(V^{-}_{L}),\pi_0(l_{b})) 
+ d_{\mathcal{C}(\Sigma_{L})}(\pi_0(l_{b}),\pi_0(l_{a}))) \nonumber +d_{\mathcal{C}(\Sigma_{L})}(\pi_0(l_{a}),\mathcal{PD}(V^{+}_{L})) \nonumber \\
&\le 1+1+1 \le 3.
\end{align*}
This completes the proof of Lemma~\ref{lem:split}. 
\end{proof}

We now complete the proof of Theorem~\ref{thm:injection}. 
By Lemma~\ref{lem:split}, $g_{r}$ must span $f$ for all $r \in [0,1]$. 
Lemma~\ref{lem:span} says that 
$\phi|_{\Sigma}$ is isotopic to $\mathrm{id}|_{\Sigma}$ relative to the points $\Sigma \cap L$. 
Therefore, $\phi$ represents the trivial element in $\mathcal{G}(M,L;\Sigma)$, which 
implies that the map $\eta$ is injective. 

\section{Proof of Theorem~$\ref{thm:main}$}
\label{sec:proof_of_main_thm}

We are now in position to prove Theorem~$\ref{thm:main}$.  

\vspace{0.5em}

\begin{proof}[Proof of Theorem~$\ref{thm:main}$.] 
Let $(M,L;\Sigma)$ be a bridge decomposition of $L$ with the distance at least $6$, 
where $L$ is a link in a $3$-manifold $M$. 
By Theorem \ref{thm:hyperbolic_structure}, $M_L$ admits a complete and finite volume hyperbolic structure. 
In this case it is well known that its mapping class group $\mathrm{MCG}(M_L)$ is a finite group,
hence so is $\mathrm{MCG}(M,L)$. 
The first assertion now follows from Theorem~\ref{thm:injection} and the inequality (\ref{eq:d_PD}). 
The second assertion can be shown by the same argument using 
Proposition \ref{prop:d_PD} instead of the inequality (\ref{eq:d_PD}). 
\end{proof}
 
 \begin{ack}
The authors would like to thank Kazuhiro Ichihara and Yeonhee Jang for 
their helpful comments on the remark at the end of Section \ref{sec:preliminaries_dist}. 
The authors would like to thank the anonymous referee
for carefully reading the manuscript. 
D. I. is supported by JSPS KAKENHI Grant
Number JP21J10249. 
Y. K. is supported by JSPS KAKENHI Grant
 Numbers JP20K03588, JP20K03614 and JP21H00978. 
\end{ack}

\end{document}